\newtheorem{theorem}{Theorem}[section]
\newtheorem{lemma}[theorem]{Lemma}
\newtheorem{corollary}[theorem]{Corollary}
\newtheorem{remark}[theorem]{Remark}
\newtheorem{example}[theorem]{Example}
\DeclareMathOperator{\dom}{dom}
\newcommand{\R}{\mathbb{R}}
\newcommand{\E}{\mathbb{E}}
\newcommand{\la}{{\langle}}
\newcommand{\ra}{{\rangle}}
\newcommand{\bi}{\begin{itemize}}
\newcommand{\ei}{\end{itemize}}
\newcommand{\ba}{\begin{array}}
\newcommand{\ea}{\end{array}}
\def\beq{\begin{equation}}
\def\eeq{\end{equation}}
\def\ba{\begin{array}}
\def\ea{\end{array}}
\def\beann{\begin{eqnarray*}}
\def\eeann{\end{eqnarray*}}
\def\bea{\begin{eqnarray}}
\def\eea{\end{eqnarray}}
\def\Def{\stackrel{\mathrm{def}}{=}}
\newcommand{\mat}[1]{\bm{#1}}
\providecommand{\0}{\boldsymbol{0}}
\renewcommand{\aa}{\boldsymbol{a}}
\providecommand{\bb}{\boldsymbol{b}}
\providecommand{\ee}{\boldsymbol{e}}
\providecommand{\hh}{\boldsymbol{h}}
\renewcommand{\ss}{\boldsymbol{s}}
\providecommand{\uu}{\boldsymbol{u}}
\providecommand{\vv}{\boldsymbol{v}}
\providecommand{\xx}{\boldsymbol{x}}
\providecommand{\yy}{\boldsymbol{y}}
\providecommand{\zz}{\boldsymbol{z}}
\providecommand{\cO}{\mathcal{O}}
\providecommand{\elll}{\boldsymbol{\ell}}
\begin{document}

\title{\textbf{ Minimizing Quasi-Self-Concordant Functions\\by Gradient Regularization
		 of Newton Method}}

\author{
	Nikita Doikov \thanks{\'{E}cole Polytechnique F\'{e}d\'{e}rale de Lausanne (EPFL),
		Machine Learning and Optimization Laboratory (MLO), Switzerland (nikita.doikov@epfl.ch).\\[5pt]
The work was supported by the Swiss State Secretariat for Education, Research and
Innovation (SERI) under contract number 22.00133.  }  
}

\date{ August 28, 2023 }

\maketitle

\begin{abstract}
We study the composite convex optimization problems
with a Quasi-Self-Concordant smooth component.
This problem class naturally interpolates between 
classic Self-Concordant functions and functions with Lipschitz continuous Hessian.
Previously, the best complexity bounds for this problem class were associated with trust-region schemes
and implementations of a ball-minimization oracle.
In this paper, we show that for minimizing Quasi-Self-Concordant functions
we can use instead the basic Newton Method with Gradient Regularization. 
For unconstrained minimization, it only involves  
a simple matrix inversion operation (solving a linear system) at each step.
We prove a fast global linear rate for this algorithm, matching the
complexity bound of the trust-region scheme, 
while our method remains especially simple to implement.
Then, we introduce the Dual Newton Method, and based on it, develop the corresponding Accelerated 
Newton Scheme for this problem class, which further improves the complexity factor of the basic method.
As a direct consequence of our results, we establish fast
global linear rates of simple variants of the Newton Method
applied to several practical problems, including
Logistic Regression, Soft Maximum, and Matrix Scaling,
without requiring additional assumptions on strong or uniform convexity
for the target objective.
\end{abstract}

\textbf{Keywords:} Newton method, convex optimization,
quasi-self-concordance, global complexities, global linear rates

\section{Introduction}

\subsection{Motivation.}

The Newton Method is a fundamental algorithm in Continuous Optimization.
Involving in its computations the Hessian of the objective,
it is considered to be a very powerful approach for
solving numerical optimization problems.
The modern theory of second-order optimization methods
heavily relies on the notion of a \textit{problem class},
that formally postulates our assumptions on the target objective function,
and which determines the global complexity guarantees of the method.

\paragraph{Self-Concordant Functions.} Historically, one of the most successful descriptions of the Newton Method
is the class of \textit{Self-Concordant functions} \cite{nesterov1994interior},
which has become a universal tool for constructing efficient 
Interior-Point Methods for non-linear Convex Optimization.
A convex function $f$ is called Self-Concordant,
if its third derivative is bounded with respect to the \textit{local norm}
induced by the Hessian.
That is, for any $\xx \in \dom f$ and arbitrary direction $\uu$:
\beq \label{SelfConcordant}
\ba{rcl}
D^3f(\xx)[\uu, \uu, \uu] & \leq & M_{\mathfrak{sc}} \la \nabla^2 f(\xx) \uu, \uu \ra^{3/2}, 
\ea
\eeq
where $M_{\mathfrak{sc}} \geq 0$ is a certain constant (the parameter of self-concordance).
Note that the exponent $3/2$ on the right hand side of \eqref{SelfConcordant} is the only viable choice,
as dictated by the requirement for homogeneity in $\uu$.

It seems that Self-Concordant functions are likely
the most suitable problem class
for characterizing the behaviour of the classic Newton Method \cite{nesterov2018lectures}:
\beq \label{NewtonStep}
\ba{rcl}
\xx_{k + 1} & = & \xx_k - \gamma_k \nabla^2 f(\xx_k)^{-1} \nabla f(\xx_k),
\qquad k \geq 0.
\ea
\eeq
Here, $\gamma_k \in (0, 1]$ is a \textit{damping} parameter.
The value $\gamma_k = 1$ corresponds to the pure Newton step,
which minimizes the full second-order Taylor polynomial
of $f(\cdot)$ around the current point~$\xx_k$.
However, the pure Newton step may not work globally, even if the objective 
functions is strongly convex (see, e.g., Example 1.4.3 in \cite{doikov2021new}).

An important feature of both definition \eqref{SelfConcordant}
and the Newton step \eqref{NewtonStep}
is their invariance under affine transformation of variables:
they do not depend on artificial norms,
nor do they require a specific choice of the coordinate system.

For iterations of the damped Newton Method \eqref{NewtonStep}
minimizing a Self-Concordant function $f$,
we can prove both super-fast \textit{local quadratic convergence} 
(an affine-invariant variant of classical results about
the local convergence of the Newton Method \cite{fine1916newton,bennett1916newton,kantorovich1948newton}),
and \textit{global convergence},
provided an appropriate choice of the damping parameter $\gamma_k$.
Global convergence ensures 
that the method will reach the local region after a bounded number of iterations, starting from an arbitrary initial point $\xx_0$.
These global guarantees are essential for optimization algorithms, as the 
initial point may often be far from the solution's neighbourhood.
In the past decade, the study of the global behaviour of second-order methods
has become one of the driving forces in the field,
including analysis for Self-Concordant functions \cite{dvurechensky2018global,hanzely2022damped,nesterov2023set}.

While serving as a powerful tool for minimizing the barrier function in
Interior-Point Methods~\cite{nesterov1994interior}, it has become evident 
that the damping Newton scheme \eqref{NewtonStep}
and the corresponding problem class of Self-Concordant functions
do not fit all modern applications. For more refined problem classes,
it is possible to construct faster second-order schemes \cite{dvurechensky2018global,nesterov2018lectures}.
Thus, to fulfil all practical needs, different assumptions must be considered.

\paragraph{Lipschitz Continuous Hessian.}
In recent years, significant interest in second-order optimization
has been linked to the Cubic Regularization of Newton Method
with its excellent global complexity guarantees for a wide class of problems \cite{nesterov2006cubic}.
The core assumption of the Cubic Newton Method
is that the \textit{Hessian is Lipschitz continuous},
which can be expressed in terms of the third derivative,
for any $\xx \in \dom f$ and arbitrary direction $\uu$:
\beq \label{LipHessian}
\ba{rcl}
D^3 f(\xx)[\uu, \uu, \uu] & \leq & L_2 \| \uu \|^3.
\ea
\eeq
Here, $L_2 \geq 0$ represents the Lipschitz constant of the Hessian, and $\| \cdot \|$ denotes a fixed \textit{global norm}
(typically the standard Euclidean norm). In contrast to \eqref{SelfConcordant},
definition \eqref{LipHessian} is no longer affine-invariant, as it requires
choosing the norm in our primal space. 

Then, every step of the Cubic Newton
is minimization of a natural \textit{global upper model} of the target objective
around the current point $\xx_k$, which follows directly from the definition of our new problem class \eqref{LipHessian}.
This global upper model is just the full second-order Taylor polynomial augmented by the cubic regularizer.
Using the standard Euclidean norm in \eqref{LipHessian}, iterations
of the method can be written in the following canonical form:
\beq \label{RegNewton}
\ba{rcl}
\xx_{k + 1} & = & \xx_k - \bigl(  \nabla^2 f(\xx_k) + \beta_k \mat{I} \bigr)^{\!-1} \nabla f(\xx_k), \qquad k \geq 0,
\ea
\eeq
for some regularization parameter $\beta_k \geq 0$.
In the Cubic Newton Method, the value of $\beta_k$ is determined at each step
as a solution to a certain univariate non-linear equation, which ensures the global progress of every iteration.
For convex functions satisfying \eqref{LipHessian}, the global complexity of the Cubic Newton
is 
\beq \label{CNComplexity}
\ba{c}
\cO\Bigl( \bigl[ \frac{L_2 D^3}{\varepsilon} \bigr]^{1/2} \,\Bigr)
\ea
\eeq
second-order oracle calls to find an $\varepsilon$-solution
in terms of the functional residual, where $D$ is the diameter of the initial sublevel set.
Note that the complexity $\cO( \varepsilon^{-1/2} )$
is significantly better than the standard $\cO( \varepsilon^{-1} )$
of the first-order Gradient Method \cite{nesterov2018lectures}.
In subsequent works, various accelerated second-order schemes were discovered
\cite{nesterov2008accelerating,monteiro2013accelerated,nesterov2018lectures,doikov2020contracting,nesterov2021inexact,nesterov2021inexact2}
that achieve improved rates of convergence.
Eventually, in \cite{kovalev2022first,carmon2022optimal}, they attain the optimal complexity of $\cO( \varepsilon^{-2/7} )$,
which is the best possible \cite{arjevani2019oracle,nesterov2018lectures} for the problem class \eqref{LipHessian}.

Therefore, we can conclude that the second-order methods
for the convex functions with Lipschitz continuous Hessian~\eqref{LipHessian}
reached their natural theoretical limitations, and any further progress
in their convergence rates seems to require some fundamentally different assumptions.
At the same time, it is clear that the objective function
may belong to several problem classes simultaneously.
Thus, based on the cubic regularization, \textit{adaptive}
second-order methods that do not need to fix the Lipschitz constant
were studied in \cite{cartis2011adaptive1,cartis2011adaptive2},
and \textit{universal} schemes
that can automatically adjust to the problem classes with H\"older continuous Hessian
of arbitrary degree
were proposed in \cite{grapiglia2017regularized,grapiglia2019accelerated,doikov2021minimizing}.

An important recent line of research was devoted 
to the Gradient Regularization of Newton Method
\cite{polyak2009regularized,ueda2009regularized,mishchenko2023regularized,doikov2023gradient},
which proposes to choose the regularization parameter in method \eqref{RegNewton}
proportional to a certain power of the current gradient norm. That is,
for some $\sigma_k \geq 0$ and $\alpha \in [0, 1]$:
\beq \label{GradReg}
\ba{rcl}
\beta_k & := & \sigma_k \| \nabla f(\xx_k) \|^{\alpha}.
\ea
\eeq
It was first shown in \cite{mishchenko2023regularized} and independently rediscovered in \cite{doikov2023gradient},
that for the problem class \eqref{LipHessian}, we can set the parameters 
as follows:
$$
\ba{rcl}
\sigma_k & \equiv & L_2^{1/2} \qquad \text{and} \qquad \alpha \;\; \equiv \;\; 1/2,
\ea
$$
which equips the Gradient Regularization of Newton Method with
the same global complexity \eqref{CNComplexity} as for the Cubic Newton,
up to an additive logarithmic term.
Note that the rule \eqref{GradReg} is very simple to implement in practice.
After computing the regularization parameter $\beta_k$,  we need to perform just one matrix inversion operation
(solving a linear system), that is of the same cost as in the damped Newton scheme~\eqref{NewtonStep}.

\paragraph{Lipschitz Third Derivative.}

Later on, it was shown that
the Gradient Regularization technique makes the Newton Method
\textit{Super-Universal} \cite{doikov2022super} --- the algorithm is able to automatically adjust to a wide
family of problem classes
with H\"older continuous second and third derivatives.
The most notable example of this family is the class
of convex functions with \textit{Lipschitz continuous third derivative},
which was initially attributed to high-order Tensor Methods 
\cite{birgin2017worst,nesterov2019implementable,gasnikov2019near,grapiglia2020tensor,cartis2020sharp,doikov2021local}.
Taking into account convexity, this problem class can be characterized by the following bound
for the third derivative, for any $\xx \in \dom f$ and arbitrary directions $\uu, \vv$
(see Lemma~3 in \cite{nesterov2019implementable}):
\beq \label{LipThird}
\ba{rcl}
D^3 f(\xx)[\uu, \uu, \vv] & \leq &  \sqrt{2L_3} \la \nabla^2 f(\xx) \uu, \uu \ra^{1/2} \|\uu\| \|\vv\|,
\ea
\eeq
where $L_3 \geq 0$ is the Lipschitz constant.
It was shown in~\cite{doikov2022super} that we can set 
$$
\ba{rcl}
\sigma_k & \equiv & L_3^{1/3}
\qquad \text{and} \qquad
\alpha \;\; \equiv \;\; 2/3
\ea
$$
in the Gradient Regularization of Newton Method \eqref{RegNewton},\eqref{GradReg},
which gives the following global complexity:
\beq \label{ThirdDerGR}
\ba{rcl}
\cO\Bigl( \bigl[ \frac{L_3 D^4}{\varepsilon} \bigr]^{1/3} + \ln \frac{1}{\varepsilon} \,\Bigr)
\ea
\eeq
second-order oracle calls to find an $\varepsilon$-solution in terms of the functional residual.
We see that the dependence on $\varepsilon$ in this complexity bound is better 
than in \eqref{CNComplexity} of the standard Cubic Newton,
although the rate of convergence is still sublinear.
It is clear that such an advancement comes from using
 different assumption \eqref{LipThird}.

\paragraph{Quasi-Self-Concordance.}
In this work, we study the problem class 
of \textit{Quasi-Self-Concordant} convex functions,
that satisfy the following bound, for any
$\xx \in \dom f$ and arbitrary direction $\uu, \vv$:
\beq \label{QSC}
\ba{rcl}
D^3 f(\xx)[\uu, \uu, \vv] & \leq & 
M \la \nabla^2 f(\xx) \uu, \uu \ra  \| \vv \|,
\ea
\eeq
where $M \geq 0$
is the \textit{parameter of Quasi-Self-Concordance}.

This functional class was introduced in \cite{bach2010self},
extending the definition of the standard Self-Concordant functions \eqref{SelfConcordant}
to the Logistic Regression model.
Consequently it was studied in~\cite{sun2019generalized,karimireddy2018global,carmon2020acceleration},
in the context of generalized Self-Concordant functions and introducing the notion of \textit{Hessian stability}~\cite{karimireddy2018global}.
Inequality \eqref{QSC} can be seen as an interpolation
between the previous problem classes \eqref{SelfConcordant} and \eqref{LipHessian}.
It leads to the \textit{global} upper and lower second-order models of
the objective (see Figure~\ref{FigureModels} and Lemma~\ref{LemmaSmoothness}).

\begin{figure}[H]
	\centering
	\includegraphics[width=0.7\textwidth ]{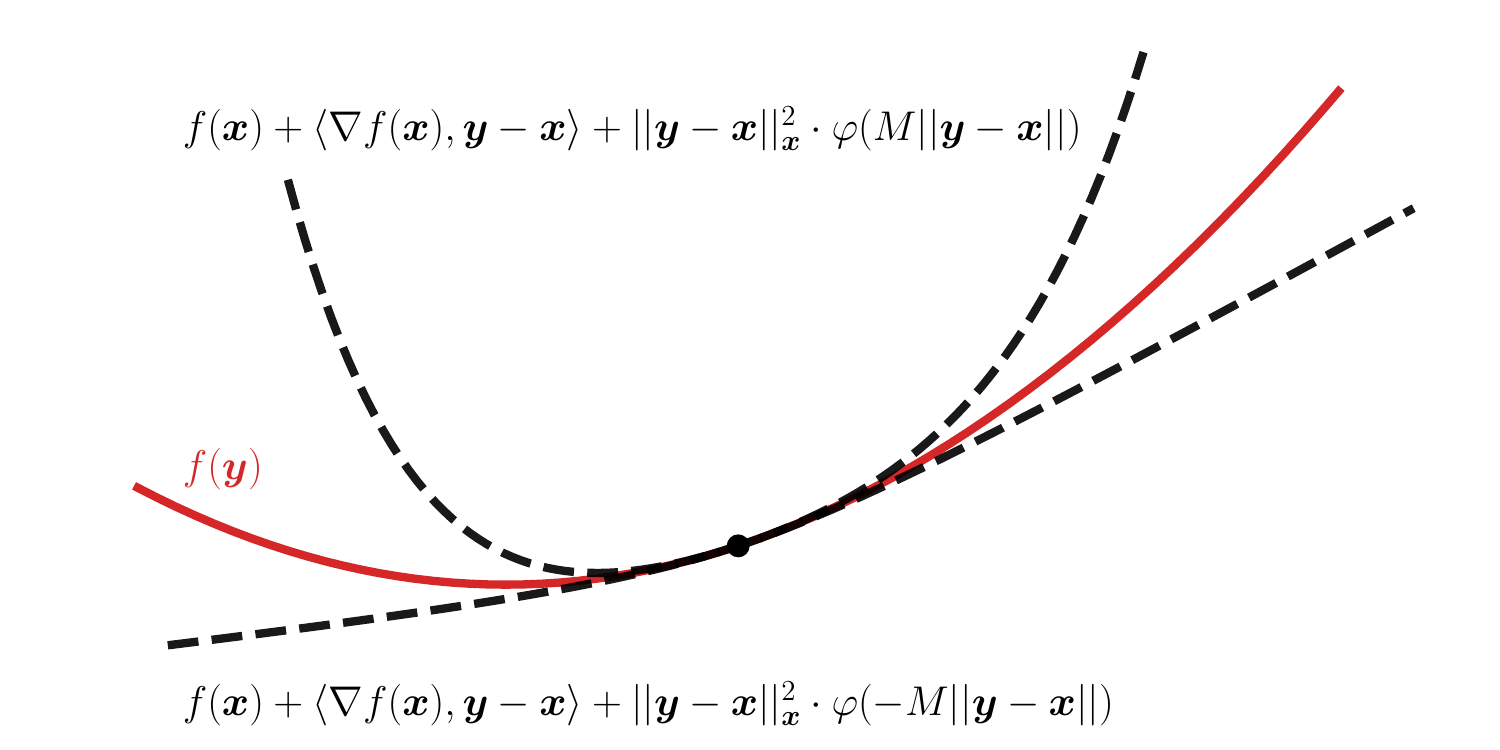}\\[2pt]
	\caption{ \small Global and lower second-order models
		of a Quasi-Self-Concordant function $f(\yy)$. 
	We denote $\| \yy - \xx \|_{\xx}^2 \Def \la \nabla^2 f(\xx)(\yy - \xx), \yy - \xx \ra$,
	and $\varphi(t) \Def \frac{e^t - t - 1}{t^2} \geq 0$ is a convex and monotone univariate function.}
	\label{FigureModels}
\end{figure}

It appears that many practical applications actually satisfy assumption \eqref{QSC},
including Logistic and Exponential Regression models \cite{bach2010self}, Soft Maximum \cite{nesterov2005smooth},
Matrix Scaling and Matrix Balancing \cite{cohen2017matrix} (see our discussion of these objective functions
and derivation of the corresponding parameter $M$ in Section~\ref{SectionQSC}).

At the same time, it seems to be a very suitable problem class for studying second-order optimization methods.
Previously, it was mainly considered for analysing the local behaviour 
of the damped Newton scheme \eqref{NewtonStep} in \cite{bach2010self,sun2019generalized}.
The global linear convergence on this problem class was established in \cite{karimireddy2018global}
for a variant of the trust-region method, and in \cite{carmon2020acceleration} 
for the methods based on implementations of a ball-minimization oracle.

\subsection{Contributions.}

In our paper, we show that a good choice for minimizing a Quasi-Self-Concordant function
is to employ the Gradient Regularization technique. Namely, we show that 
the Gradient Regularization of Newton Method \eqref{RegNewton}, \eqref{GradReg}
with the following selection of the parameters:
$$
\ba{rcl}
\sigma_k & \equiv & M \qquad \text{and} \qquad \alpha \;\; \equiv \;\; 1,
\ea
$$
achieves the \textit{global linear} rate of convergence, that matches the best rates of 
the trust-region schemes~\cite{karimireddy2018global,carmon2020acceleration}.
The simplest form of our method for unconstrained minimization,
choosing the standard Euclidean norm, is as follows (see algorithm~\eqref{PrimalNewton}):
$$
\boxed{
\ba{rcl}
\xx_{k + 1} & = & 
\xx_k  - \Bigl( \nabla^2 f(\xx_k) + M\| \nabla f(\xx_k) \| \mat{I} \Bigr)^{-1} \nabla f(\xx_k), \qquad k \geq 0.
\ea
}
$$
We prove that this method 
possesses the fast linear rate (Theorem~\ref{TheoremPrimalRate})
achieving the global complexity bound:
\beq \label{MainComplexity}
\ba{rcl}
\cO\Bigl( MD \cdot \ln \frac{1}{\varepsilon} \,\Bigr)
\ea
\eeq
iterations (second-order oracle calls) to find an $\varepsilon$-solution
in terms of the functional residual.
Note that for this result we do not require any extra assumption on strong or uniform 
convexity of the target objective.
Complexity bound \eqref{MainComplexity} is much better than both \eqref{CNComplexity} and \eqref{ThirdDerGR} 
in terms of the dependence on $\varepsilon$.
It shows that we can achieve any reasonable accuracy with a mild amount
of extra work of our method,
while the main complexity factor for solving the problem is the condition number $MD$.

In Section~\ref{SectionLocal}, we study the local behaviour 
of our method. We prove the local quadratic convergence
in terms of a new scale-invariant measure,
without assuming any uniform bound on the eigenvalues of the Hessian. Then,
we show the consequence of our theory for minimizing
strongly convex functions.

We use our local results in Section~\ref{SectionDual},
where
we present the Dual Newton Method.
This algorithm provides us with a \textit{hot-start}
possibility for minimizing the dual objects (the gradients).

Based on it, 
we develop the Accelerated Newton scheme for our problem class in Section~\ref{SectionAccelerated}.
It improves the complexity factor of the basic primal Newton method
by taking the power $2/3$ of the condition number. 
Totally, it needs the following number of second-order oracle calls
and the same number of quadratic minimization subproblems
to find an $\varepsilon$-solution:
\beq \label{AcceleratedComplexity}
\ba{rcl}
\tilde{\cO}\Bigl( (M R )^{2/3} \, \Bigr),
\ea
\eeq
where $R = \| \xx_0 - \xx^{\star}\| \leq D$
is the explicit distance from the initial point to the solution, and 
$\tilde{O}(\cdot)$ hides logarithmic factors.
This accelerated rate was also previously achieved using a different technique,
and the power $2/3$ was shown to be \textit{optimal} in the context of a ball-minimization oracle~\cite{carmon2022optimal}.

Therefore, our new accelerated algorithm
attains the best known dependence on the condition number \eqref{AcceleratedComplexity} for the problem class
of Quasi-Self-Concordant functions, while it remains especially easy to implement,
involving only a simple matrix inversion operation (solving a linear system) for unconstrained minimization,
and avoiding any difficult univariate equations.

\subsection{Notation.} \label{SubsectionNotation}

We denote by $\E$
a finite-dimensional vector space and by $\E^{*}$
its dual, which is the space of linear functions on $\E$.
The value of function $\ss \in \E^*$ at $\xx \in \E$ is denoted by
$\la \ss, \xx \ra \Def \ss(\xx)$.
Of course, one can always associate $\E$ and $\E^{*}$
with $\R^n$
by choosing a certain basis.  However, it is more natural to work directly with $\E$,
since all our results are independent of a coordinate representation of the objects.

Let us fix a self-adjoint positive-definite operator $\mat{B}: \E \to \E^{*}$
(notation $\mat{B} = \mat{B}^* \succ \0$)
and denote the following \textit{global} Euclidean norms by
$$
\ba{rcl}
\| \hh \| & \Def & \la \mat{B} \hh, \hh \ra^{1/2}, \qquad \hh \in \E, \\[10pt]
\| \ss \|_{*} & \Def & 
\max\limits_{\hh \in \E : \|\hh\| \leq 1} \la \ss, \hh \ra
\;\; = \;\; \la \ss, \mat{B}^{-1} \ss \ra^{1/2}, \qquad \ss \in \E^*.
\ea
$$
By associating $\E$ and $\E^{*}$ with $\R^n$, we can use the standard 
Euclidean norm with $\mat{B} := \mat{I}$ (identity matrix). However, for some problems we can have a better choice of $\mat{B}$, 
that takes into account geometry of the problem (see Examples~\ref{ExampleSoftMax}, \ref{ExampleGLM}).

For a several times differentiable function $f$,
we denote by $\nabla f(\xx)$ its gradient and 
by $\nabla^2 f(\xx)$ its Hessian, evaluated at some $\xx \in \dom f \subseteq \E$.
Note that
$$
\ba{rcl}
\nabla f(\xx), \; \nabla^2 f(\xx) \hh  & \in & \E^{*},
\qquad \hh \in \E.
\ea
$$

For convex functions, we have 
$\nabla^2 f(\xx) \succeq 0$. Let us denote by $\lambda(\xx) \geq 0$
the smallest eigenvalue of operator $\nabla^2 f(\xx)$ 
with respect to $\mat{B}$, thus
$$
\ba{rcl}
\lambda(\xx) & \Def & 
\max \Bigl\{ \, 
\lambda \geq 0 \; : \;  
\nabla ^2 f(\xx) - \lambda \mat{B} \succeq \0 
\,\Bigr\}
\;\; = \;\;
\lambda_{\min}\bigl( \mat{B}^{-1/2} \nabla^2 f(\xx) \mat{B}^{-1/2} \bigr),
\ea
$$
where the last expression can be seen
as the standard minimal eigenvalue of the corresponding matrix
for a certain choice of the basis
(while $\lambda(\xx)$ is invariant to the choice of coordinate system).

For a point   $\xx \in \dom f$ such that  $\nabla^2 f(\xx) \succ 0$,
we can also use the \textit{local primal norm} at point $\xx$:
$$
\ba{rcl}
\| \hh \|_{\xx} & \Def & \la \nabla^2 f(\xx) \hh, \hh \ra^{1/2}, 
\qquad \hh \in \E. 
\ea
$$
For convenience, we use notation $\| \hh \|_{\xx} $ even if 
the smallest eigenvalue is zero ($\lambda(\xx) = 0$).

We denote by $D^3 f(\xx)$
the third derivative of function $f$ at point $\xx \in \dom f \subseteq \E$,
which is a trilinear symmetric form. 
Thus, for any $\hh_1, \hh_2, \hh_3 \in \E$, we have $D^3 f(\xx)[\hh_1, \hh_2, \hh_3] \in \R$.
For the repeating arguments, we use the following shortenings:
$$
\ba{rcl}
D^3 f(\xx)[\uu]^2[\vv] & \equiv &
D^3 f(\xx)[\uu, \uu, \vv], \qquad 
D^3 f(\xx)[\uu]^3 \;\; \equiv \;\; D^3 f(\xx)[\uu, \uu, \uu],
\qquad \uu, \vv \in \E.
\ea
$$

Our goal is to solve the following Composite Convex Optimization problem:
\beq \label{MainProblem}
\ba{rcl}
F^{\star} \;\; = \;\;
\min\limits_{\xx \in Q} \Bigl\{  \;
F(\xx) & \Def & f(\xx) + \psi(\xx)
\; \Bigr\},
\ea
\eeq
where $Q \Def \dom \psi \subseteq \E$.
Function $f: Q \to \R$ is convex, several times differentiable and
\textit{Quasi-Self-Concordant}
(we formalize our smoothness assumption in the next section), and
$\psi: \E \to \R \cup \{ +\infty \}$
is a \textit{simple} closed convex proper function. 
We assume that a solution to \eqref{MainProblem} exists.

If $\psi \equiv 0$, then 
\eqref{MainProblem} is just unconstrained minimization problem: 
$
\min_{\xx} f(\xx).
$
Another classical example is $\psi$ being $\{0, +\infty \}$-indicator
of a given simple closed convex set $Q \subseteq \E$. Then, \eqref{MainProblem} is
constrained minimization problem:
$
\min_{\xx \in Q} f(\xx).
$

\section{Quasi-Self-Concordant Functions}
\label{SectionQSC}

We say that a convex function $f$ is Quasi-Self-Concordant with parameter $M \geq 0$,
if for all $\uu, \vv \in \E$ it holds
\beq \label{QSCDef}
\boxed{
\ba{rcl}
D^3 f(\xx)[\uu]^2[\vv] & \leq & M \| \uu \|_{\xx}^2 \| \vv \|, 
\qquad \xx \in \dom f.
\ea
}
\eeq

Let us start with several practical examples of objective functions,
that satisfy \eqref{QSCDef}. Then, we study the main properties
of this problem class.
Some of them are standard and can be found in the literature \cite{bach2010self,sun2019generalized}.
We provide their proofs for completeness of our presentation. 

\begin{example}[Quadratic Function] \label{ExampleQuadratic}
	Clearly,
	$$
	\ba{rcl}
	f(\xx) & := & \frac{1}{2} \la \mat{A} \xx, \xx \ra - \la \bb, \xx \ra,
	\ea
	$$
	for any $\mat{A} : \E \to \E^{*}$ such that $\mat{A} = \mat{A}^* \succeq 0$ 
	and $\bb \in \E^{*}$,
	satisfies \eqref{QSCDef} with $\boxed{M = 0}$.
\end{example}

\begin{example}[Soft Maximum] \label{ExampleSoftMax}
	For a given set of linear forms $\aa_1, \ldots, \aa_m \in \E^{*}$
	and vector $\bb \in \R^m$, let
	\beq \label{SoftMaxFunc}
	\ba{rcl}
	f_{\mu}(\xx) & := &	\mu \ln \biggl( \, \sum\limits_{i = 1}^m 
	e^\frac{\la \aa_i, \xx \ra - b^{(i)}}{\mu} \biggr)
	\quad \approx \quad \max\limits_{1 \leq i \leq m} \bigl[ \la \aa_i, \xx \ra - b^{(i)}  \bigr],
	\ea
	\eeq
	where $\mu > 0$ is a smoothing parameter.
	The problems of this type are important 
	in applications with minimax strategies for
	matrix games and $\ell_{\infty}$-regression~\cite{nesterov2005smooth}.
	Let us fix $\mat{B} := \sum_{i = 1}^m \aa_i \aa_i^{*}$. We assume $\mat{B} \succ \0$, otherwise
	we can reduce dimensionality of the problem.
	Then, denoting
	$$
	\ba{rcl}
	\pi_i & = & \pi_i(\xx) \;\; := \;\; 
	e^{ \frac{\la \aa_i, \xx \ra - b^{(i)}}{\mu} } 
	\biggl( \, \sum\limits_{i = 1}^m 
	 e^\frac{\la \aa_i, \xx \ra - b^{(i)}}{\mu} \biggr)^{-1},
	 \qquad 1 \leq i \leq m,
	\ea
	$$
	we have, by direct computation, for any $\uu, \vv \in \E$:
	$$
	\ba{rcl}
	\la \nabla f_{\mu}(\xx), \uu \ra & = & \sum\limits_{i = 1}^m \pi_i \la \aa_i, \uu \ra, \\
	\\
	\la \nabla^2 f_{\mu}(\xx) \uu, \vv \ra & = &
	\frac{1}{\mu}\sum\limits_{i = 1}^m \pi_i \la \aa_i, \uu \ra \la \aa_i, \vv \ra
	- \frac{1}{\mu} \la \nabla f_{\mu}(\xx), \uu \ra \la \nabla f_{\mu}(\xx), \vv \ra \\
	\\
	& = & \frac{1}{\mu} \sum\limits_{i = 1}^m \pi_i 
	\la \aa_i -  \nabla f_{\mu}(\xx), \uu \ra
	\la \aa_i - \nabla f_{\mu}(\xx), \vv \ra,
	\ea
	$$
	and
	$$
	\ba{rcl}
	D^3 f_{\mu}(\xx)[\uu]^2 [\vv]
	& = & 
	\frac{1}{\mu^2} \sum\limits_{i = 1}^m \pi_i
	\la \aa_i - \nabla f_{\mu}(\xx), \uu \ra^2 \la \aa_i - \nabla f_{\mu}(\xx), \vv \ra \\
	\\
	& \leq & 
	\frac{1}{\mu} \la \nabla^2 f_{\mu}(\xx)\uu, \uu \ra \max\limits_{1 \leq i, j \leq m} 
	\la \aa_i - \aa_j, \vv \ra
	\;\; \leq \;\; 
	\frac{2}{\mu} \la \nabla^2 f_{\mu}(\xx)\uu, \uu \ra \| \vv \|.
	\ea
	$$
	Hence, $f_{\mu}(\cdot)$ is Quasi-Self-Concordant
	with parameter $\boxed{ \textstyle M = \frac{2}{\mu}}$.
\end{example}

\begin{example}[Separable Optimization] \label{ExampleGLM}
	In applications related to Machine Learning and Statistics~\cite{hastie2009elements,sra2012optimization},
	very often we have the following structure of the objective function,
	for given data $\aa_1, \ldots, \aa_m \in \E^{*}$ and $\bb \in \R^m$:
	$$
	\ba{rcl}
	f(\xx) & = & \frac{1}{m} \sum\limits_{i = 1}^m \phi( \la \aa_i, \xx \ra - b^{(i)} ),
	\ea
	$$
	where $\phi: \R \to \R$ is a certain loss function.
	Some common examples are
	\begin{enumerate}
	\item Logistic Regression: $\phi(t) = \log(1 + e^t)$. We have
	$$
	\ba{rcl}
	\phi'(t) & = & \frac{1}{1 + e^{-t}}, \qquad
	\phi''(t) \;\; = \;\; \phi'(t) \cdot (1 - \phi'(t)), \\
	\\
	\phi'''(t) & = & \phi''(t) \cdot (1 - 2 \phi'(t)).
	\ea
	$$
	Therefore,
	$$
	\ba{rcl}
	| \phi'''(t) | & = & \phi''(t) \cdot | 1 - \frac{2}{1 + e^{-t}} |
	\;\; \leq \;\; \phi''(t), \qquad \forall t \in \R.
	\ea
	$$
	Thus, $\phi(\cdot)$ is Quasi-Self-Concordant with parameter $\boxed{M_{\phi} = 1}$.
	
	\item Exponential Regression and Geometric Optimization \cite{nesterov2018lectures}: $\phi(t) = e^t$. Clearly,
	$\phi'''(t)  =  \phi''(t) =  e^t$,
	and thus $\phi(\cdot)$ is Quasi-Self-Concordant with parameter $\boxed{M_{\phi} = 1}$.
	\end{enumerate}
Now, let us assume that the loss function $\phi(\cdot)$ is Quasi-Self-Concordant
with parameter $M_{\phi} \geq 0$,
and choose $\mat{B} := \sum_{i = 1}^m \aa_i \aa_i^*$. We have,
for any $\uu, \vv \in \E$:
$$
\ba{rcl}
\la \nabla^2 f(\xx) \uu, \uu \ra
& = & 
\frac{1}{m}\sum\limits_{i = 1}^m \phi''(\la \aa_i, \xx \ra - b^{(i)}) \la \aa_i, \uu \ra, \\
\\
D^3 f(\xx)[\uu]^2[\vv]
& = & \frac{1}{m}  \sum\limits_{i = 1}^m \phi'''(\la \aa_i, \xx \ra - b^{(i)}) \la \aa_i, \uu \ra^2 \la \aa_i, \vv \ra.
\ea
$$		
Hence,
$$
\ba{rcl}
D^3 f(\xx)[\uu]^2[\vv] & \leq & 
\frac{1}{m} \sum\limits_{i = 1}^m 
| \phi'''( \la \aa_i, \xx \ra - b^{(i)} ) | \la \aa_i, \uu \ra^2
\cdot \max\limits_{1 \leq i \leq m} | \la \aa_i, \vv \ra | \\
\\
& \leq & 
\frac{M_{\phi}}{m} \sum\limits_{i = 1}^m \phi''( \la \aa_i, \xx \ra - b^{(i)} )
\la \aa_i, \uu \ra^2 \cdot \| \vv \| \;\; = \;\; M_{\phi} \|\uu\|_{\xx}^2 \| \vv \|,
\ea
$$
and we conclude that $f$
is also Quasi-Self-Concordant with parameter $\boxed{M = M_{\phi}}$.
\end{example}

\begin{example}[Matrix Scaling and Matrix Balancing] \label{ExampleMatrixScalingBalancing}
	For a given non-negative matrix $\mat{A} \in \R^{n \times n}_+$,
	important for applications in Scientific Computing are the following objectives (see \cite{cohen2017matrix}),
	possibly with some additional linear forms.
	
	\begin{enumerate}
		\item Matrix Scaling:
		\beq \label{MatrixScaling}
		\ba{rcl}
		f(\xx, \yy) & = & \sum\limits_{1 \leq i, j \leq n} A^{(i, j)} e^{ x^{(i)} - y^{(j)} },
		\qquad \xx, \yy \in \R^n.
		\ea
		\eeq
		\item Matrix Balancing:
		\beq \label{MatrixBalancing}
		\ba{rcl}
		f(\xx) & = & \sum\limits_{1 \leq i, j \leq n} A^{(i, j)} e^{ x^{(i)} - x^{(j)} }, \qquad \xx \in \R^n.
		\ea
		\eeq
	\end{enumerate}
	Note that both of these objectives can be represented in the following general form.
	For a given set of positive numbers $a_1, \ldots, a_m \in \R_{+}$, and linear functions $\elll_1, \ldots, \elll_m \in \E^{*}$,
	we have:
	\beq \label{MatrixGeneral}
	\ba{rcl}
	f(\xx) & = & \sum\limits_{i = 1}^m a_i e^{ \la \elll_i, \xx \ra }, \qquad \xx \in \E,
	\ea
	\eeq
	which is Quasi-Self-Concordant due to the previous example.
	In both cases \eqref{MatrixScaling}, \eqref{MatrixBalancing}, we set $m := n^2$
	and $a_{i + n (j - 1)} := A^{(i, j)}$ for $1 \leq i, j \leq n$. Then,
	for Matrix Scaling \eqref{MatrixScaling} we choose
	\beq \label{ScalingInstance}
	\ba{rcl}
	\E & := &  \R^{2n} \qquad \text{and} \qquad 
	\elll_{i + n(j - 1)} \;\; := \;\; \ee_i - \ee_{n + j}, \qquad 1 \leq i, j \leq n,
	\ea
	\eeq
	where $\ee_1, \ldots, \ee_{2n}$ are the standard basis vectors in $\R^{2n}$. 
	For Matrix Balancing \eqref{MatrixBalancing}, we set
	\beq \label{BalancingInstance}
	\ba{rcl}
	\E & := & \R^n \qquad \text{and} \qquad
	\elll_{i + n(j - 1)} \;\; := \;\; \ee_i - \ee_j, \qquad 1 \leq i, j \leq n.
	\ea
	\eeq
	Note that for both instances \eqref{ScalingInstance} and \eqref{BalancingInstance},
	we can use $\mat{B} := \mat{I}$ (identity matrix of the appropriate dimension).
	Then, we get, for any $\uu, \vv \in \E$:
	$$
	\ba{rcl}
	D^3 f(\xx)[\uu]^2[\vv] & = & \sum\limits_{i = 1}^m a_i e^{\la \elll_i, \xx \ra}
	\la \elll_i, \uu \ra^2 \la \elll_i, \vv \ra 
	\;\; \leq \;\; 
	\sum\limits_{i = 1}^m a_i e^{\la \elll_i, \xx \ra}
	\la \elll_i, \uu \ra^2 \cdot \max\limits_{1 \leq i \leq m} | \la \elll_i, \vv \ra | \\
	\\
	& = & \| \uu \|_{\xx}^2 \cdot \max\limits_{1 \leq i \leq m} | \la \elll_i, \vv \ra | 
	\;\; \leq \;\; \sqrt{2} \| \uu \|_{\xx}^2  \| \vv \|.
	\ea
	$$
	Therefore, we conclude that both \eqref{MatrixScaling} and \eqref{MatrixBalancing}
	are Quasi-Self-Concordant with respect to the standard Euclidean norm with parameter $\boxed{M = \sqrt{2}}$.
\end{example}

Let us declare some of the most important properties of 
Quasi-Self-Concordant functions, that follow
directly from definition \eqref{QSCDef}.
We allow the following simple operations.

\begin{enumerate}
	\item \textit{Scale-invariance.} Note that multiplying Quasi-Self-Concordant function by an arbitrary positive coefficient $c > 0$:
	$$
	\ba{rcl}
	f(\cdot) &\mapsto& c  f(\cdot),
	\ea
	$$
	the parameter $M$ remains the same.
	
	\item \textit{Sum of two functions.}
	Let $f_1$ and $f_2$ be Quasi-Self-Concordant with some $M_1, M_2 \geq 0$. Then,
	the function 
	$$
	\ba{rcl}
	f(\cdot)  &= & f_1(\cdot) + f_2(\cdot)
	\ea
	$$ 
	is Quasi-Self-Concordant with parameter $M = \max\{ M_1, M_2 \}$.
	As a direct consequence, adding to our objective an arbitrary convex quadratic function
	does not change $M$.
	
	\item \textit{Affine substitution.}
	Let $\E_1$ and $\E_2$ be two vector spaces.
	Let $\mat{B}: \E_1 \to \E_1^*$ be a self-adjoint positive operator 
	that defines the global norm in $\E_1$.
	Assume that $f(\cdot)$, $\dom f \subseteq \E_1$ is Quasi-Self-Concordant with parameter $M_f$
	with respect to this norm. Consider the function:
	$$
	\ba{rcl}
	g(\xx) & = & f(\mat{A} \xx - \bb), \qquad \xx \in \E_2,
	\ea
	$$
	where $\mat{A}: \E_2 \to \E_1$ is a given linear transformation and $\bb \in \E_1$.
	Then, $g(\cdot)$ is Quasi-Self-Concordant 
	for the same parameter $M_g = M_f$
	with respect to the norm induced be the following operator:
	\beq \label{Bprime}
	\ba{rcl}
	\mat{B}' & = & \mat{A}^{*} \mat{B} \mat{A} \;\;  : \;\; \E_2 \to \E_2^*.
	\ea
	\eeq
\end{enumerate}

As opposed to the classic Self-Concordant functions \cite{nesterov1994interior},
we see that the Quasi-Self-Concordant functions \eqref{QSCDef} are \textit{scale-invariant}, but not \textit{affine-invariant}.
The latter problem class depends on a fixed global norm $\| \cdot \|$,
and after applying an affine substitution, we should modify the norm accordingly \eqref{Bprime}.

Let us study the global properties of the Quasi-Self-Concordant functions, that
we use intensively in the analysis of our methods. 
We start with the global behaviour of the Hessians.

\begin{lemma} \label{LemmaHessians}
For any $\xx, \yy \in \dom f$, it holds
\beq \label{HessBound}
\ba{rcl}
\nabla^2 f(\xx) e^{-M\|\yy - \xx\|}
\;\; \preceq \;\;
\nabla^2 f(\yy) & \preceq & \nabla^2 f(\xx) e^{M\|\yy - \xx\|}.
\ea
\eeq
\end{lemma}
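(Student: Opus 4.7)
The natural approach is to reduce the operator inequality to a scalar differential inequality along the segment between $\xx$ and $\yy$. Fix $\xx, \yy \in \dom f$ and an arbitrary direction $\uu \in \E$, and introduce the path $\xx(t) \Def \xx + t(\yy - \xx)$ for $t \in [0,1]$. Define
$$
\phi(t) \;\Def\; \la \nabla^2 f(\xx(t)) \uu, \uu \ra \;=\; \|\uu\|_{\xx(t)}^2 \;\geq\; 0.
$$
Differentiating under the third derivative gives $\phi'(t) = D^3 f(\xx(t))[\yy - \xx,\,\uu,\,\uu] = D^3 f(\xx(t))[\uu]^2[\yy - \xx]$. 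The plan is then to feed this into the Quasi-Self-Concordance inequality \eqref{QSCDef} with the role of $\vv$ played by $\yy - \xx$, and both $\pm(\yy - \xx)$, to obtain a two-sided bound
$$
|\phi'(t)| \;\leq\; M\,\|\yy - \xx\|\,\phi(t), \qquad t \in [0,1].
$$
Note that we use trilinearity of $D^3 f(\xx)$ to pass from the one-sided bound in \eqref{QSCDef} to an absolute-value bound (replacing $\vv$ by $-\vv$).

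The resulting inequality means that $\ln \phi(t)$ has derivative bounded in absolute value by $M\|\yy - \xx\|$. Assuming first that $\phi(0) > 0$, continuity lets us integrate $\frac{d}{dt}\ln \phi(t)$ from $0$ to $1$ on any subinterval where $\phi$ stays positive, which in fact must be all of $[0,1]$ since the differential inequality forbids $\phi$ from reaching $0$ in finite time from a positive value. This yields
$$
\phi(0)\, e^{-M\|\yy - \xx\|} \;\leq\; \phi(1) \;\leq\; \phi(0)\, e^{M\|\yy - \xx\|}.
$$
Since $\uu$ was arbitrary, unpacking the definitions $\phi(0) = \la \nabla^2 f(\xx)\uu,\uu \ra$ and $\phi(1) = \la \nabla^2 f(\yy)\uu,\uu \ra$ gives exactly the operator inequality \eqref{HessBound}.

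The main subtlety is the degenerate case $\phi(0) = 0$ (when $\nabla^2 f(\xx)$ is merely positive semidefinite along $\uu$). The cleanest way to handle this is a standard perturbation argument: apply the result just proved to the regularized function $f_\varepsilon(\xx) \Def f(\xx) + \tfrac{\varepsilon}{2}\|\xx\|^2$, which by the sum rule (item 2 in the properties listed just before the lemma) is Quasi-Self-Concordant with the same parameter $M$ and has strictly positive-definite Hessian everywhere, then take $\varepsilon \to 0^+$. This is the only step that requires more than routine manipulation, and it is exactly the step ensuring the inequality is meaningful without any nondegeneracy hypothesis on $\nabla^2 f$.
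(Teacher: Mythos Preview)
Your proof is correct and follows essentially the same approach as the paper: reduce to a scalar function $\phi(t) = \la \nabla^2 f(\xx + t(\yy-\xx))\uu, \uu\ra$ along the segment, use Quasi-Self-Concordance to bound $|\phi'(t)| \leq M\|\yy - \xx\|\,\phi(t)$, and integrate the logarithmic derivative. The only difference is in the treatment of the degenerate case $\phi(0)=0$: the paper argues directly by contradiction (taking the infimum of times at which $\phi$ could vanish and showing this is impossible), whereas you regularize with $f_\varepsilon = f + \tfrac{\varepsilon}{2}\|\cdot\|^2$ and pass to the limit; both are standard and your route is arguably cleaner.
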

\begin{proof}
	Let us fix a direction $\uu \in \E$ and consider a point $\xx \in \dom f$ s.t. 
	$\la \nabla^2 f(\xx) \uu, \uu \ra > 0$.
	By continuity, we know that
	for $\yy \not= \xx$ that is sufficiently close to $\xx$, we also have
	\beq \label{HessLocalPositive}
	\ba{rcl}
	\la \nabla^2 f(\xx + t (\yy - \xx)) \uu, \uu \ra & > &  0, \qquad \forall t \in [0, 1].
	\ea
	\eeq
	Then, the univariate function $g(t) = \ln \| \uu \|_{\xx + t(\yy - \xx)}^2$, $t \in [0, 1]$ is well defined, and we obtain
	$$
	\ba{rcl}
	|g'(t)| & = & \Bigl| \frac{ D^3 f(\xx + t(\yy - \xx))[\uu]^2[\yy - \xx] }{\|\uu\|_{\xx + t(\yy - \xx))}^2}   \Bigr|
	\;\; \overset{\eqref{QSCDef}}{\leq} \;\; M\|\yy - \xx\|.
	\ea
	$$
	Hence, from the Mean-Value Theorem, we conclude that
	$$
	\ba{rcl}
	\Bigl| \ln \frac{\| \uu \|_{\yy}^2}{ \| \uu \|_{\xx}^2} \Bigr|
	& = & 
	| g(1) - g(0) | \;\; \leq \;\; M\|\yy - \xx\|.
	\ea
	$$
	which is
	\beq \label{HessBoundU}
	\ba{rcl}
	\la \nabla^2 f(\xx) \uu, \uu \ra e^{-M\|\yy - \xx\|}
	& \leq & 
	\la \nabla^2 f(\yy) \uu, \uu \ra 
	\;\; \leq \;\;
		\la \nabla^2 f(\xx) \uu, \uu \ra e^{M\|\yy - \xx\|}.
	\ea
	\eeq
	Note that we established \eqref{HessBoundU} for all $\yy \in \dom f$ such that \eqref{HessLocalPositive} holds.
	
	Assume that for some $\yy' \in \dom f$, we have  $\la \nabla^2 f(\yy')\uu, \uu \ra = 0$.
	Then, we can consider 
	$$
	\ba{rcl}
	\alpha^* & = & \inf\bigl\{ \alpha > 0 \; : \; \la \nabla^2 f(\xx + \alpha(\yy' - \xx)) \uu, \uu \ra = 0  \bigr\}.
	\ea
	$$
	Thus, for $\yy^* := \xx + \alpha^{*}(\yy' - \xx)$, we also have $\la \nabla^2 f(\yy^{*}) \uu, \uu \ra = 0$.
	However, for any sufficiently small $\epsilon > 0$ there is a point $\yy^{*}_{\epsilon} :=  \xx + (\alpha^* - \epsilon)(\yy' - \xx) \in \dom f$ such 
	that the value of quadratic form is separated from zero:
	$$
	\ba{rcl}
	\la \nabla^2 f(\yy^*_{\epsilon}) \uu, \uu \ra  & \overset{\eqref{HessBoundU}}{\geq} &
	\la \nabla f(\xx)\uu, \uu \ra e^{-M\alpha^*} \;\; > \;\; 0,
	\ea
	$$
	which contradicts $\la \nabla^2 f(\yy^{*}) \uu, \uu \ra = 0$
	and hence contradicts $\la \nabla^2 f(\yy')\uu, \uu \ra = 0$.
	
	Therefore, we conclude that $\la \nabla^2 f(\yy)\uu, \uu \ra > 0$
	for \textit{all} $\yy \in \dom f$ which finishes the proof.
\end{proof}

\begin{remark} \label{HessPositiveDef}
	From the proof of Lemma~\ref{LemmaHessians}, we see that
	for any direction $\uu \in \E$, we can have either
	$$
	\ba{rcl}
	\la \nabla^2 f(\xx)\uu, \uu \ra  & > & 0, \qquad \forall \xx \in \dom f,
	\ea
	$$
	or
	$$
	\ba{rcl}
	\la \nabla^2 f(\xx)\uu, \uu \ra  & \equiv & 0, \qquad \forall \xx \in \dom f.
	\ea
	$$
	In the latter case, we can reduce dimensionality of the problem.
\end{remark}

Using a simple integration argument, we obtain 
now global bounds on the variation of the gradients,
and objective function values.

\begin{lemma} \label{LemmaSmoothness}
	For any $\xx, \yy \in \dom f$, it holds
	\beq \label{GradBound}
	\ba{rcl}
	\| \nabla f(\yy) - \nabla f(\xx) - \nabla^2 f(\xx)(\yy - \xx) \|_* & 
	\leq & M \|\yy - \xx\|_{\xx}^2 \cdot \varphi ( M \|\yy - \xx\| ),
	\ea
	\eeq
	and
	\beq \label{FuncBound}
	\ba{rcl}
	\|\yy - \xx\|_{\xx}^2 \cdot \varphi(-M\|\yy - \xx\|) 
	&  \leq &
	f(\yy) - f(\xx) - \la \nabla f(\xx), \yy - \xx \ra \\
	\\
	& \leq & \| \yy - \xx\|_{\xx}^2 \cdot \varphi(M\|\yy - \xx\|),
	\ea
	\eeq
	where $\varphi(t) \Def \frac{e^t - t - 1}{t^2} \geq 0$.
\end{lemma}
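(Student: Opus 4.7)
The plan is to derive both estimates by integrating the Hessian bound from Lemma~\ref{LemmaHessians} along the segment $[\xx, \yy]$. Denote for brevity $r \Def M\|\yy - \xx\|$, and recall that by definition of the global norm, the point $\xx + s(\yy - \xx)$ is at distance $s\|\yy - \xx\|$ from $\xx$, so Lemma~\ref{LemmaHessians} yields
\[
\|\yy - \xx\|_\xx^2 \, e^{-sr} \;\leq\; \|\yy - \xx\|_{\xx + s(\yy - \xx)}^2 \;\leq\; \|\yy - \xx\|_\xx^2 \, e^{sr},
\qquad s \in [0, 1].
\]

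For the gradient bound \eqref{GradBound}, the first step is to fix an arbitrary test vector $\vv \in \E$ with $\|\vv\| \leq 1$ and write
\[
\la \nabla f(\yy) - \nabla f(\xx) - \nabla^2 f(\xx)(\yy - \xx), \vv \ra
\;=\; \int_0^1 \!\! \int_0^\tau D^3 f(\xx + s(\yy - \xx))[\yy - \xx]^2[\vv] \, ds \, d\tau,
\]
obtained by differentiating $\la \nabla^2 f(\xx + s(\yy - \xx))(\yy - \xx), \vv \ra$ in $s$. Applying the defining inequality \eqref{QSCDef} to the inner integrand and then the Hessian bound above gives the pointwise estimate $M \|\yy - \xx\|_\xx^2 e^{sr} \|\vv\|$. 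Then I would carry out the elementary double integration $\int_0^1 \int_0^\tau M e^{sr} \, ds \, d\tau = \frac{e^r - r - 1}{r^2} \cdot r \cdot \frac{1}{M} \cdot M = \varphi(r)$ (after factoring constants), and finally take the supremum over $\vv$ with $\|\vv\| \leq 1$ to conclude.

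For the function-value bound \eqref{FuncBound}, I would use the standard identity
\[
f(\yy) - f(\xx) - \la \nabla f(\xx), \yy - \xx \ra
\;=\; \int_0^1 \!\!\int_0^\tau \|\yy - \xx\|_{\xx + s(\yy - \xx)}^2 \, ds \, d\tau,
\]
and substitute the two-sided exponential bracket on $\|\yy - \xx\|_{\xx + s(\yy - \xx)}^2$ directly. The upper estimate reduces to the same double integral $\int_0^1 \int_0^\tau e^{sr} ds\, d\tau$ which evaluates to $\varphi(r)$, while the lower one gives $\int_0^1 \int_0^\tau e^{-sr} ds\, d\tau = \frac{e^{-r} + r - 1}{r^2} = \varphi(-r)$, using the fact that $\varphi$ squares its argument in the denominator. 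I do not expect any real obstacle: the argument is a straightforward integration, and the only subtlety is book-keeping the factor of $M$ and correctly identifying the function $\varphi$ after the two integrations. The $r = 0$ case is handled by continuity, since $\varphi$ extends continuously to $\varphi(0) = 1/2$.
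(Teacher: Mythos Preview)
Your proposal is correct and follows essentially the same approach as the paper: both proofs bound the third derivative via \eqref{QSCDef}, feed in the Hessian comparison from Lemma~\ref{LemmaHessians}, and integrate along the segment. The only cosmetic difference is that the paper uses the single integral $\int_0^1 (1-\tau)\,g(\tau)\,d\tau$ where you write the equivalent double integral $\int_0^1\!\int_0^\tau g(s)\,ds\,d\tau$ (related by Fubini), and your bookkeeping of the constant $M$ in the gradient bound is a bit garbled in the narrative but gives the right answer $M\varphi(r)$.
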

\begin{proof}
Indeed, by Taylor formula, we have for any $\vv \in \E$:
$$
\ba{cl}
& \la \nabla f(\yy) - \nabla f(\xx) - \nabla^2 f(\xx)(\yy - \xx), \vv \ra
\;\; = \;\;
\int\limits_0^1 (1 - \tau) D^3 f(\xx + \tau(\yy - \xx))[\yy - \xx]^2[\vv] d\tau \\
\\
& \overset{\eqref{QSCDef}}{\leq} \;
M \|\vv\|  \cdot
\int\limits_{0}^1 (1 - \tau) \|\yy - \xx\|^2_{\xx + \tau(\yy - \xx)} d \tau 
\;\; \overset{\eqref{HessBound}}{\leq} \;\;
M\|\yy - \xx\|_{\xx}^2  \cdot \| \vv \|  \cdot
\int\limits_{0}^1 (1 - \tau) e^{\tau M\|\yy - \xx\|} d\tau \\
\\
&  = \;
M \|\yy - \xx\|_{\xx}^2 \cdot  \| \vv \|  \cdot \varphi( M\| \yy - \xx\| ).
\ea
$$
Maximizing this bound with respect to $\vv : \| \vv \| \leq 1$ gives \eqref{GradBound}.
For the objective function, we get
$$
\ba{cl}
& f(\yy) - f(\xx) - \la \nabla f(\xx), \yy - \xx \ra
\;\; = \;\;
\int\limits_{0}^1 (1 - \tau) \nabla^2 f(\xx + \tau(\yy - \xx))[\yy - \xx]^2 d\tau \\
\\
& \overset{\eqref{HessBound}}{\leq} \;
\| \yy - \xx \|_{\xx}^2 \cdot \int\limits_{0}^1 (1 - \tau) e^{\tau M\|\yy - \xx\|} d\tau
\;\; = \;\;
\| \yy - \xx\|_{\xx}^2 \cdot \varphi(M\|\yy - \xx \|),
\ea
$$
which is the right hand side of \eqref{FuncBound}.
To prove the left hand side,  we use the lower bound for the Hessian from \eqref{HessBound} . It gives
$$
\ba{rcl}
 f(\yy) - f(\xx) - \la \nabla f(\xx), \yy - \xx \ra
&\overset{\eqref{HessBound}}{\geq} &
\| \yy - \xx \|_{\xx}^2 \cdot \int\limits_{0}^1 (1 - \tau) e^{-\tau M\|\yy - \xx\|} d\tau \\
\\
& = & \|\yy - \xx\|_{\xx}^2 \cdot \varphi(-M\|\yy - \xx\|),
\ea
$$
that completes the proof.
\end{proof}

\begin{remark}
	Function $\varphi(t) = \frac{e^t - t - 1}{t^2}$
	used in Lemma~\ref{LemmaSmoothness}
	is convex and monotone. Its graph is shown in Figure~\ref{FigurePhi}.
\end{remark}

\begin{figure}[H]
	\centering
	\includegraphics[width=0.32\textwidth ]{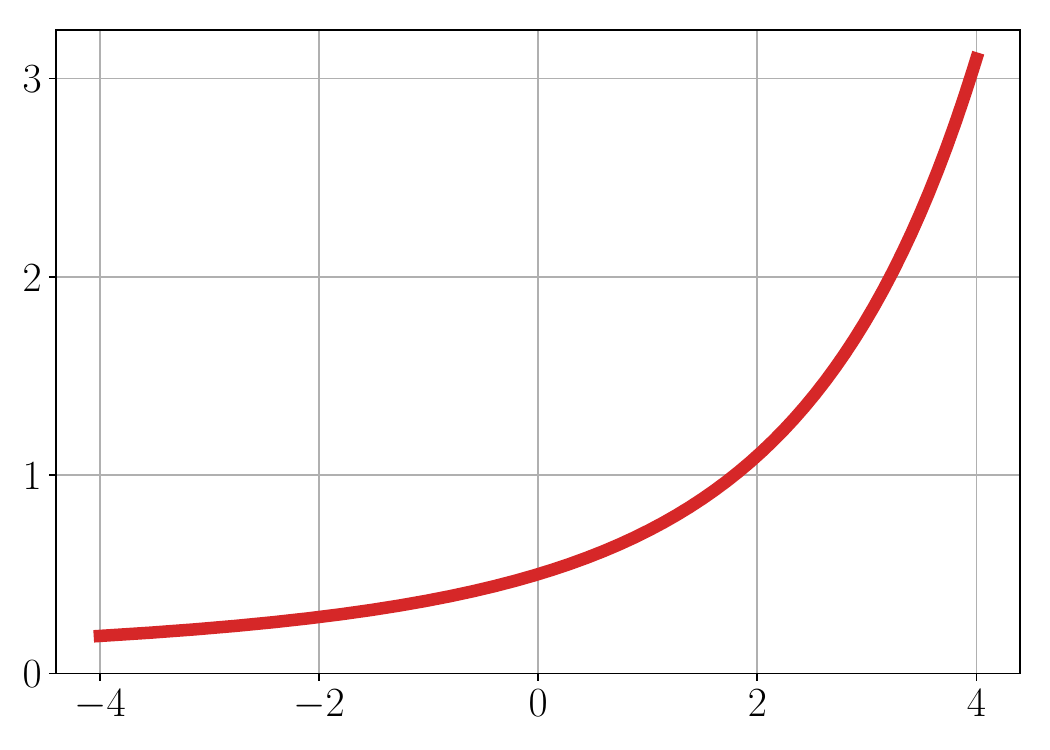}\\[2pt]
	\caption{ \small Graph of function $\varphi(t) = \frac{e^t - t - 1}{t^2}$.}
	\label{FigurePhi}
\end{figure}

	Note that \eqref{FuncBound} are the global upper and lower
	second-order models of the objective (see Figure~\ref{FigureModels}).
	It seems to be difficult to use them directly in optimization algorithms,
	since these models are non-convex in general.
	However, from Lemmas~\ref{LemmaHessians} and \ref{LemmaSmoothness} we see
	that function $f(\cdot)$ is almost quadratic in every ball 
	of radius $r := 1 / M$. Indeed, according to \eqref{HessBound}, for any $\xx, \yy \in \dom f$
	s.t. $\| \xx - \yy \| \leq 1/M$ the Hessian is \textit{stable} 
	\cite{karimireddy2018global}:
	$$
	\ba{rcl}
	\frac{1}{e}\nabla^2 f(\xx) & \preceq & \nabla^2 f(\yy)
	\;\; \preceq \;\; e \nabla^2 f(\xx).
	\ea
	$$
	Therefore, for minimizing a Quasi-Self-Concordant objective we 
	can use a trust-region scheme with a fixed radius 
	\cite{karimireddy2018global,carmon2020acceleration},
	that achieves the global complexity: 
	$\tilde{\cO}\bigl( MD \bigr)$
	iterations to solve the problem with any given accuracy (global linear rate of convergence),
	where $D$ is the diameter of the initial sublevel set and $\tilde{\cO}(\cdot)$
	hides logarithmic factors.
	
	As we show in the next section, we can use instead 
	the Gradient Regularization of Newton Method,
	which is easier to implement and, most importantly, it does not need to fix the value of $r = 1/M$
	(the trust-region radius), allowing an adaptive choice of parameter $M$.

\section{Gradient Regularization of Newton Method}
\label{SectionPrimal}

In this section, we propose the primal Newton scheme
for minimizing Quasi-Self-Concordant objectives
in the composite form \eqref{MainProblem}.
For a given $\xx \in Q$, we consider the point $\xx^+$ defined as
\beq \label{PrimalStep}
\boxed{
\ba{rcl}
\xx^+ & \Def &
\arg\min\limits_{\yy \in Q}
\Bigl\{ \;  \la \nabla f(\xx), \yy - \xx \ra + \frac{1}{2} \| \yy - \xx\|_{\xx}^2
	+ \frac{\beta}{2}\| \yy - \xx\|^2 + \psi(\yy)  \; \Bigr\}
\ea
}
\eeq
with regularization coefficient proportional to the current (sub)gradient norm: 
\beq \label{BetaChoice}
\boxed{
\ba{rcl}
\beta & := & \sigma \| F'(\xx) \|_*
\ea
}
\eeq
for some $F'(\xx) \in \partial F(\xx)$ and parameter $\sigma \geq 0$. 
Thus, \eqref{PrimalStep} is a simple Newton step
with \textit{quadratic} regularization and with a possible composite component.
In case $\psi \equiv 0$ (unconstrained minimization), we obtain a direct formula
for one step:
$$
\ba{rcl}
\xx^+ & = & \xx - \Bigl( \nabla^2 f(\xx) + \sigma \| \nabla f(\xx) \|_* \mat{B} \Bigr)^{-1} \nabla f(\xx),
\ea
$$
which requires solving just one linear system, as in the classical Newton Method.
In general, \eqref{PrimalStep} is a composite quadratic minimization problem,
and we can use first-order Gradient Methods~\cite{nesterov2018lectures} for solving it.
Note that the objective in \eqref{PrimalStep}
is \textit{strongly convex} for $\beta > 0$,  and we can expect fast linear rate of convergence for 
these gradient-based solvers.

Optimality condition for one step \eqref{PrimalStep} is as follows (see, e.g., Theorem 3.1.23 in~\cite{nesterov2018lectures}):
\beq \label{StatCond}
\ba{rcl}
\la \nabla f(\xx) + \nabla^2 f(\xx)(\xx^+ - \xx) + \beta \mat{B}(\xx^+ - \xx), \yy - \xx^+ \ra
+ \psi(\yy) & \geq & \psi(\xx^+), \qquad \forall \yy \in Q.
\ea
\eeq
Therefore, we have an access to the following special selection
of the subgradient at new point:
\beq \label{FPrimeDef}
\ba{rcl}
F'(\xx^+) & \Def & 
\nabla f(\xx^+)
- \nabla f(\xx) - \nabla^2 f(\xx)(\xx^+ - \xx) - \beta \mat{B}(\xx^+ - \xx)
\;\; \in \;\; \partial F(\xx^+).
\ea
\eeq
Now, utilizing convexity of $\psi$, we claim the following important bound 
for the length of one step. This lemma is the main tool, which is used in 
the core of our analysis.

\begin{lemma} \label{LemmaStepBound} 
 For any $F'(\xx) \in \partial F(\xx)$, we have
\beq \label{StepBound}
\ba{rcl}
\|\xx^+ - \xx\| & \leq & \frac{\| F'(\xx) \|_*}{\beta + \lambda(\xx)},
\ea
\eeq
and
\beq \label{StepBoundX}
\ba{rcl}
\|\xx^+ - \xx\|_{\xx}^2 & \leq & \| \xx^+ - \xx\| \cdot  \| F'(\xx) \|_*
\;\; \overset{\eqref{StepBound}}{\leq} \;\;
\frac{ \| F'(\xx) \|_*^2 }{\beta + \lambda(\xx)}.
\ea
\eeq
\end{lemma}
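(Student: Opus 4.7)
The plan is to use the optimality condition \eqref{StatCond} for the subproblem against the convexity of the composite term $\psi$. Specifically, I would set $\yy = \xx$ in \eqref{StatCond}, which yields
$$
\psi(\xx^+) - \psi(\xx) \;\;\leq\;\; \la \nabla f(\xx) + \nabla^2 f(\xx)(\xx^+ - \xx) + \beta \mat{B}(\xx^+ - \xx),\, \xx - \xx^+ \ra.
$$
On the other hand, writing any $F'(\xx) \in \partial F(\xx)$ as $F'(\xx) = \nabla f(\xx) + \psi'(\xx)$ for some $\psi'(\xx) \in \partial \psi(\xx)$, convexity of $\psi$ gives $\psi(\xx^+) - \psi(\xx) \geq \la \psi'(\xx), \xx^+ - \xx \ra$.

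Combining these two inequalities and regrouping the term $\la \nabla f(\xx) + \psi'(\xx), \xx^+ - \xx \ra = \la F'(\xx), \xx^+ - \xx \ra$ on one side, the key consequence is
$$
\|\xx^+ - \xx\|_{\xx}^2 + \beta \|\xx^+ - \xx\|^2 \;\;\leq\;\; \la F'(\xx), \xx - \xx^+ \ra \;\;\leq\;\; \|F'(\xx)\|_* \cdot \|\xx^+ - \xx\|,
$$
where the last step is Cauchy--Schwarz for the dual norm. This single inequality is the engine of both bounds.

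From here, \eqref{StepBoundX} follows directly by dropping the non-negative $\beta \|\xx^+ - \xx\|^2$ term. For \eqref{StepBound}, I would lower-bound $\|\xx^+ - \xx\|_{\xx}^2 \geq \lambda(\xx) \|\xx^+ - \xx\|^2$ using the definition of $\lambda(\xx)$ as the minimal eigenvalue of $\nabla^2 f(\xx)$ relative to $\mat{B}$, obtaining
$$
(\beta + \lambda(\xx)) \|\xx^+ - \xx\|^2 \;\;\leq\;\; \|F'(\xx)\|_* \cdot \|\xx^+ - \xx\|,
$$
and then divide through by $\|\xx^+ - \xx\|$ (the case $\xx^+ = \xx$ being trivial). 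Chaining \eqref{StepBound} into \eqref{StepBoundX} gives the final quadratic-in-gradient form.

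The only subtlety, and the one point worth handling explicitly, is that the identity between the subgradient used in the optimality condition and an arbitrary $F'(\xx) \in \partial F(\xx)$ requires splitting $F'(\xx)$ along the decomposition $\partial F(\xx) = \nabla f(\xx) + \partial \psi(\xx)$. This is where the convexity inequality for $\psi$ enters and makes the bound hold uniformly over the choice of $F'(\xx)$. Everything else is a clean application of Cauchy--Schwarz and the definition of $\lambda(\xx)$.
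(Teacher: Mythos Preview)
Your proof is correct and essentially the same as the paper's: both arrive at the key inequality $\|\xx^+ - \xx\|_{\xx}^2 + \beta\|\xx^+ - \xx\|^2 \leq \la F'(\xx), \xx - \xx^+ \ra$ and finish with Cauchy--Schwarz and the eigenvalue bound. The only cosmetic difference is that the paper phrases the first step via monotonicity of the subgradients $\psi'(\xx^+) - \psi'(\xx)$ (using the specific $\psi'(\xx^+)$ coming from \eqref{FPrimeDef}), whereas you plug $\yy = \xx$ into \eqref{StatCond} and combine with the subgradient inequality for $\psi$ at $\xx$---these are two equivalent unpackings of the same convexity fact.
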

\begin{proof}
Indeed, for $\psi'(\xx) := F'(\xx) - \nabla f(\xx) \in \partial \psi(\xx)$ and 
$\psi'(\xx^+) := F'(\xx^+) - \nabla f(\xx^+) \in \partial \psi(\xx^+)$, we obtain
$$
\ba{rcl}
0 & \leq & \la \psi'(\xx^+) - \psi'(\xx), \xx^+ - \xx \ra
\;\; \overset{\eqref{FPrimeDef}}{=} \;\;
\la F'(\xx), \xx - \xx^+ \ra - \|\xx^+ - \xx\|_{\xx}^2 - \beta \|\xx^+ - \xx\|^2.
\ea
$$	
Therefore, rearranging the terms and using Cauchy-Schwartz inequality, we get
$$
\ba{rcl}
\|\xx^+ - \xx\| \cdot \|F'(\xx) \|_*
& \geq & 
 \beta \|\xx^+ - \xx\|^2 + \|\xx^+ - \xx\|_{\xx}^2
\;\; \geq \;\;
(\beta + \lambda(\xx)) \|\xx^+ - \xx\|^2.
\ea
$$
which gives the required bounds.
\end{proof}

For our purposes, the most important
inequality is \eqref{StepBound}.
It shows that using the 
\textit{gradient regularization} rule \eqref{BetaChoice}
with sufficiently big $\sigma \geq M$,
we can control the multiplicative error terms in \eqref{GradBound},\eqref{FuncBound}, as follows:
\beq \label{Rho}
\ba{rcl}
\varphi\bigl( M\|\xx^+ - \xx\| \bigr) & \overset{\eqref{StepBound}}{\leq} &
\varphi\Bigl( \frac{M \|F'(\xx) \|^* }{\beta} \Bigr)
\;\; \overset{\eqref{BetaChoice}}{=} \;\;
\varphi\bigl( \frac{M}{\sigma}  \bigr)
\;\; \leq \;\; \varphi(1) \;\; \Def \;\; \rho,
\ea
\eeq
where we used monotonicity of $\varphi$. Note that
\beq \label{RhoValue}
\ba{rcl}
\rho & = & e - 2 \;\; \approx \;\; 0.718.
\ea
\eeq
We are ready to characterize one
 primal Newton step \eqref{PrimalStep}
 with Gradient Regularization rule \eqref{BetaChoice}.
 
 \begin{theorem} \label{TheoremOneStep}
 	Let $\sigma \geq M$.
 	Then, for step \eqref{PrimalStep}
 	with $\beta := \sigma \|F'(\xx) \|_*$ with 
 	some $F'(\xx) \in \partial F(\xx)$,
 	it holds
 	\beq \label{NewGradProgress}
 	\ba{rcl}
 	\la F'(\xx^+), \xx - \xx^+ \ra & \geq & \frac{1}{2\beta} \|F'(\xx^+) \|_*^2.
 	\ea
 	\eeq
 \end{theorem}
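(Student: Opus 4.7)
The plan is to exploit the explicit representation of the subgradient at the new point given by~\eqref{FPrimeDef} and reduce the claim to a single size bound on the second-order Taylor residual of the gradient. Setting $\dd \Def \xx^+ - \xx$ and
$$
\gg \;\Def\; \nabla f(\xx^+) - \nabla f(\xx) - \nabla^2 f(\xx)\dd,
$$
formula \eqref{FPrimeDef} becomes simply $F'(\xx^+) = \gg - \beta \mat{B}\dd$. Everything in the inequality~\eqref{NewGradProgress} will be expressed in terms of $\gg$ and $\dd$.

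First, I would do the elementary algebra. Using $\la \mat{B}\dd, \mat{B}^{-1}\mat{B}\dd\ra = \|\dd\|^2$, direct expansion yields $\|F'(\xx^+)\|_*^2 = \|\gg\|_*^2 - 2\beta \la \gg, \dd\ra + \beta^2 \|\dd\|^2$, while $\la F'(\xx^+), \xx - \xx^+\ra = -\la \gg, \dd\ra + \beta \|\dd\|^2$. Subtracting, the cross terms $\la \gg, \dd\ra$ cancel exactly, and
$$
\la F'(\xx^+), \xx - \xx^+\ra - \frac{1}{2\beta}\|F'(\xx^+)\|_*^2 \;=\; \frac{1}{2\beta}\bigl(\beta^2\|\dd\|^2 - \|\gg\|_*^2\bigr),
$$
so the target inequality is equivalent to the clean quadratic comparison $\|\gg\|_* \le \beta\|\dd\|$.

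Second, I would establish this bound by chaining three previously established ingredients. Lemma~\ref{LemmaSmoothness} (inequality~\eqref{GradBound}) gives $\|\gg\|_* \le M \|\dd\|_{\xx}^2 \, \varphi(M\|\dd\|)$; inequality~\eqref{Rho}, obtained from the step bound of Lemma~\ref{LemmaStepBound} together with the monotonicity of $\varphi$ and the assumption $\sigma \ge M$, gives the uniform estimate $\varphi(M\|\dd\|) \le \rho < 1$; and \eqref{StepBoundX} gives $\|\dd\|_{\xx}^2 \le \|\dd\| \cdot \|F'(\xx)\|_*$. Combining these and using once more that $\sigma \ge M$ so that $M\|F'(\xx)\|_* \le \sigma \|F'(\xx)\|_* = \beta$, we obtain $\|\gg\|_* \le \rho\,\beta\|\dd\| \le \beta\|\dd\|$, which closes the argument.

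There is no real obstacle here once the right decomposition is chosen. The only delicate point is \emph{insisting} on the specific representative $F'(\xx^+)$ from~\eqref{FPrimeDef} rather than an arbitrary subgradient: it is this choice that makes the cross terms cancel and reduces the statement to the one-line inequality $\|\gg\|_* \le \beta\|\dd\|$, whose verification is then a direct application of the smoothness bound~\eqref{GradBound} and the step-length estimate~\eqref{StepBoundX}.
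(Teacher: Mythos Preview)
Your proof is correct and is essentially the paper's own argument: both reduce the claim to the bound $\|\gg\|_* \le \beta\|\dd\|$ on the Taylor residual (the paper writes this as $\|F'(\xx^+)+\beta\mat{B}(\xx^+-\xx)\|_* \le \frac{\rho M}{\sigma}\beta\|\xx^+-\xx\|$) via \eqref{GradBound}, \eqref{Rho}, and \eqref{StepBoundX}, and then obtain \eqref{NewGradProgress} by squaring and rearranging. The only cosmetic difference is that the paper keeps the sharper factor $\rho M/\sigma$ and displays the leftover nonnegative term $\frac{\beta}{2}\|\dd\|^2\bigl(1-(\rho M/\sigma)^2\bigr)$ before dropping it, whereas you absorb $\rho M/\sigma \le 1$ immediately.
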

 \begin{proof}
 	From~\eqref{GradBound}, we have
 	\beq \label{NewGradEuclidean}
 	\ba{rcl}
 	\| F'(\xx^+) + \beta \mat{B}(\xx^+ - \xx) \|_*
 	& \overset{\eqref{FPrimeDef}}{=} &
 	\| \nabla f(\xx^+) - \nabla f(\xx) - \nabla^2 f(\xx)(\xx^+ - \xx) \|_* \\
 	\\
 	& \overset{\eqref{GradBound}}{\leq} &
 	M\|\xx^+ - \xx\|_{\xx}^2 \cdot \varphi\bigl( M\|\xx^+ - \xx\| \bigr) 
 	\;\; \overset{\eqref{Rho}}{\leq} \;\;
 	\rho M \|\xx^+ - \xx\|_{\xx}^2 \\
 	\\
 	& \overset{\eqref{StepBoundX}}{\leq} &
 	\rho M \| \xx^+ - \xx \| \cdot \|F'(\xx) \|_*
 	\;\; \overset{\eqref{BetaChoice}}{=} \;\;
 	\frac{\rho M}{\sigma} \beta  \| \xx^+ - \xx \|.
 	\ea
 	\eeq
 	Thus, taking square of both sides and rearranging the terms, we obtain
 	$$
 	\ba{rcl}
 	\la F'(\xx^+), \xx - \xx^+ \ra & \overset{\eqref{NewGradEuclidean}}{\geq} &
 	\frac{1}{2\beta} \|F'(\xx^+) \|_*^2
 	\; + \; \frac{\beta}{2} \| \xx^+ - \xx \|^2 \cdot \Bigl(  
 	1 - \bigl[  \frac{\rho M}{\sigma} \bigr]^2 
 	 \Bigr) \\
 	 \\
 	 & \geq & \frac{1}{2\beta} \|F'(\xx^+) \|_*^2,
 	\ea
 	$$
 	where the last bound holds due to our choice: $\sigma \geq M$.
 \end{proof}
 
 From \eqref{NewGradProgress}, we see that one step of our method is \underline{monotone}:
 $F(\xx^+) \leq F(\xx)$. Indeed, by convexity
 \beq \label{FuncGlobal}
 \ba{rcl}
 F(\xx) - F(\xx^+) & \geq & \la F'(\xx^+), \xx - \xx^+ \ra
 \;\; \overset{\eqref{NewGradProgress}}{\geq} \;\;
 \frac{1}{2\beta} \|F'(\xx^+)\|_*^2,
\ea
 \eeq
and for each step we have the global progress
in terms of the objective function value.
 
Let us present our primal method
in the algorithmic form.

\beq\label{PrimalNewton}
\ba{|c|}
\hline\\
\quad \mbox{\bf Newton Method with Gradient Regularization} \quad\\
\\
\hline\\
\ba{l}
\mbox{{\bf Choose} $\xx_0 \in Q$ and 
	set
	$g_0 = \| F'(\xx_0) \|_{*}$ for $F'(\xx_0) \in \partial F(\xx_0)$.} \\
\\
\mbox{\bf For $k \geq 0$ iterate:}\\[10pt]
\mbox{1. For some $\sigma_k \geq 0$, compute } \\[10pt]
	$$
	\ba{rcl}
	\xx_{k + 1}  & =  &
	\arg\min\limits_{\yy \in Q} \Bigl\{  
	\la \nabla f(\xx_k), \yy - \xx_k \ra + \frac{1}{2}\|\yy - \xx_k\|_{\xx_k}^2
	+ \frac{\sigma_k g_k}{2}\|\yy - \xx_k\|^2 + \psi(\yy)  \Bigr\}.
	\ea
	$$ \\
\\
\mbox{2. Set $g_{k + 1} = \|  \nabla f(\xx_{k + 1}) - \nabla f(\xx_k) - 
	(\nabla^2 f(\xx_k) + \sigma_k g_k \mat{B}) (\xx_{k + 1} - \xx_k)  \|_*$.} \\
\\
\ea\\
\hline
\ea
\eeq

Note that in this algorithm we have a freedom of choosing
the regularization parameter $\sigma_k \geq 0$ for each iteration $k \geq 0$.
A simple choice that follows directly from our theory, is the constant one:
$$
\ba{rcl}
\sigma_k & \equiv & M, \qquad \forall k,
\ea
$$
where $M$ is the parameter of Quasi-Self-Concordance \eqref{QSCDef}.
However, if constant $M$ is unknown,
we can do a simple adaptive search that ensures sufficient progress~\eqref{FuncGlobal} 
for the objective function value.
Namely, in each iteration $k \geq 0$, we choose $\sigma_k > 0$ such that
the following inequality is satisfied:
\beq \label{OneStepProgress}
\ba{rcl}
\la F'(\xx_{k + 1}), \xx_k - \xx_{k + 1} \ra
 & \geq & 
\frac{g_{k + 1}^2}{2\sigma_k g_k} ,
\ea
\eeq
by increasing $\sigma_k$ twice.
From Theorem~\ref{TheoremOneStep}, we know that it will be satisfied
at least for $\sigma_k \geq M$.
Then, after accepting a certain $\sigma_k$ for the current iteration, we 
start our adaptive search in the next step from the value
$\sigma_{k + 1} := \frac{\sigma_k}{2}$. This allows both increase and decrease
of the regularization parameters.
It is easy to show that the price of such adaptive search is just \textit{one} extra step per iteration in average.

Note that our algorithm~\eqref{PrimalNewton} with implementation of this adaptive procedure
and using the progress condition \eqref{OneStepProgress}
actually coincides with the recently proposed \textit{Super-Universal Newton Method} 
(Algorithm 2 in \cite{doikov2022super} with parameter $\alpha := 1$).
Therefore, our new theory also extends the analysis
of the Super-Universal Newton Method~\cite{doikov2022super} for the Quasi-Self-Concordant functions.

Let us justify the global linear rate for our method.
For the analysis of our primal scheme, we introduce the initial sublevel set:
$
\mathcal{L}_0  \Def 
\bigl\{ \xx \in Q \, : \, F(\xx) \leq F(\xx_0)  \bigr\},
$
and its diameter, which we assume to be bounded:
\beq \label{DefD}
\ba{rcl}
D & \Def & \sup\limits_{\xx, \yy \in \mathcal{L}_0} \|\xx - \yy\|
\;\; < \;\; +\infty.
\ea
\eeq

We prove our main result.
\begin{theorem} \label{TheoremPrimalRate}
	Let the smooth component of problem~\eqref{MainProblem}
	be Quasi-Self-Concordant with parameter $M \geq 0$.
	Let in each iteration of method~\eqref{PrimalNewton},
	we choose
	parameter $\sigma_k \in [0, 2M]$ such that condition \eqref{OneStepProgress} holds.\footnote{For example, the constant choice $\sigma_k \equiv M$.}
	Then, we have the \underline{global linear rate}, for $k \geq 1$:
	\beq \label{PrimalRate}
	\ba{rcl}
	F(\xx_k) - F^{\star} & \leq & 
	\exp\Bigl(  - \frac{k}{8MD}  \Bigr) \bigl( F(\xx_0) - F^{\star}  \bigr)
	\, + \, 
	\exp\Bigl(  -\frac{k}{4} \Bigr) g_0 D.
	\ea	
	\eeq
\end{theorem}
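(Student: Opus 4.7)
The starting point is the one-step bound from Theorem~\ref{TheoremOneStep}: since $\sigma_k \leq 2M$ is assumed, inequality~\eqref{OneStepProgress} gives $F(\xx_k) - F(\xx_{k+1}) \geq g_{k+1}^2/(4M g_k)$. Two preparatory facts are then needed. First, \eqref{FuncGlobal} implies the method is monotone, so $\xx_k \in \mathcal{L}_0$ for every $k$ and hence $\|\xx_k - \xx^{\star}\| \leq D$; convexity of $F$ together with Cauchy--Schwarz then yields the Polyak-type inequality $F(\xx_k) - F^{\star} \leq g_k D$. Second, applying Cauchy--Schwarz to the left-hand side of~\eqref{OneStepProgress} and comparing with~\eqref{StepBound} (both use $\beta_k = \sigma_k g_k$) gives the universal one-step gradient-growth bound $g_{k+1} \leq 2 g_k$, which holds without any extra restriction on $\sigma_k$.

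Writing $\Delta_k := F(\xx_k) - F^{\star}$, the heart of the argument is a two-way case split at each iteration, calibrated so that one branch produces the Newton-type rate $1/(8MD)$ and the other forces rapid gradient shrinkage. Call an iteration Case~A if $g_{k+1}^2 \geq g_k \Delta_k/(2D)$, and Case~B otherwise. In Case~A, combining the progress inequality with $\Delta_k \leq g_k D$ gives $\Delta_k - \Delta_{k+1} \geq \Delta_k/(8MD)$, hence $\Delta_{k+1} \leq (1 - 1/(8MD))\Delta_k$. In Case~B, $g_{k+1}^2 < g_k \Delta_k/(2D) \leq g_k^2/2$, so the gradient shrinks by a constant factor: $g_{k+1} < g_k/\sqrt{2}$.

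Let $n_A, n_B$ denote the counts of Case~A and Case~B iterations through step $k$, so that $n_A + n_B = k$. Telescoping yields the two parallel bounds
\[
\Delta_k \;\leq\; \Delta_0\bigl(1 - 1/(8MD)\bigr)^{n_A} \;\leq\; \Delta_0\, e^{-n_A/(8MD)},
\]
and, using $g_{k+1}\leq 2g_k$ in Case~A, $g_{k+1}\leq g_k/\sqrt{2}$ in Case~B, together with $\Delta_k \leq g_k D$,
\[
\Delta_k \;\leq\; g_0 D\cdot 2^{\,n_A - n_B/2}.
\]
Since $(\ln 2)/2 > 1/4$, in particular $2^{-n_B/2} \leq e^{-n_B/4}$, so when $n_B$ dominates, the second bound furnishes the $e^{-k/4} g_0 D$ piece; when $n_A$ dominates, the first furnishes the $e^{-k/(8MD)}\Delta_0$ piece. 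Adding the two bounds (rather than taking their minimum) absorbs the slack uniformly in $n_A$ and produces the claimed conclusion $\Delta_k \leq e^{-k/(8MD)} \Delta_0 + e^{-k/4} g_0 D$.

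I expect the final combination step to be the main obstacle: the counts $n_A, n_B$ are dictated by the trajectory and cannot be chosen adversarially, so the combination must be valid uniformly and the exponents must match the specific constants $1/(8MD)$ and $1/4$. A cleaner alternative is to use the single Lyapunov function $\Phi_k := \Delta_k + c\, g_k/M$ with, say, $c = 1/2$; a direct computation using $\Delta_k \leq g_k D$ and the one-step progress shows $\Phi_{k+1} \leq \bigl(1 - 1/(4MD+2)\bigr)\Phi_k$, and then the two-term form of the theorem can be recovered by splitting $\Phi_0 = \Delta_0 + g_0/(2M)$ and using $g_0/(2M) \leq g_0 D$.
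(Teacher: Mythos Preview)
Your per-iteration case split does not combine to the stated bound. Take $n_A = n_B = k/2$: bound~(II) becomes $g_0 D \cdot 2^{k/4}$, which diverges, so only bound~(I) survives, giving $\Delta_k \leq \Delta_0\,e^{-k/(16MD)}$. This is \emph{not} bounded by $e^{-k/(8MD)}\Delta_0 + e^{-k/4} g_0 D$ once $k$ is large and $MD$ is large (the second term vanishes and $e^{-k/(16MD)} > e^{-k/(8MD)}$). More generally, bound~(I) is only as good as $e^{-k/(8MD)}\Delta_0$ when $n_A = k$, and bound~(II) is only as good as $e^{-k/4} g_0 D$ when $n_A \lesssim 0.09\,k$; for any $n_A$ in between, neither term of the target is matched. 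So ``adding the two bounds'' does not absorb the slack, because the slack can occur simultaneously in both. You correctly flag this as the obstacle, but the fix you propose does not resolve it.

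Your Lyapunov alternative is sound and does give a genuine linear rate: with $\Phi_k = \Delta_k + g_k/(2M)$, the quadratic in $t = g_{k+1}/g_k$ indeed has nonpositive discriminant exactly at $\theta = 1/(4MD + 2)$, so $\Phi_{k+1} \leq (1-\theta)\Phi_k$. However, this yields a \emph{single} decay factor $e^{-k/(4MD+2)}$ on both pieces of $\Phi_0$, and that does not recover the theorem's two-term form. The splitting step you sketch requires both $g_0/(2M) \leq g_0 D$ (i.e.\ $MD \geq 1/2$) and $e^{-k/(4MD+2)} \leq e^{-k/4}$ (i.e.\ $MD \leq 1/2$), which cannot hold together except at equality; so the claimed recovery fails.

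The paper avoids the combination problem altogether by working multiplicatively and doing the case split only once, at the \emph{final} state rather than per iteration. From $F_i - F_{i+1} \geq g_{i+1}^2/(4M g_i)$ and $F_i \leq g_i D$ one gets $\ln(F_i/F_{i+1}) \geq (g_{i+1}/g_i)^2/(4MD)$; telescoping and applying AM--GM to the sum $\sum (g_{i+1}/g_i)^2$ collapses the product to $(g_k/g_0)^{2/k}$, yielding
\[
\ln\frac{F_0}{F_k} \;\geq\; \frac{k}{4MD}\Bigl(1 + \tfrac{2}{k}\ln\tfrac{g_k}{g_0}\Bigr)
\;\geq\; \frac{k}{4MD}\Bigl(1 + \tfrac{2}{k}\ln\tfrac{F_k}{g_0 D}\Bigr).
\]
Now a single dichotomy on whether $\tfrac{2}{k}\ln\tfrac{F_k}{g_0 D} \leq -\tfrac12$ gives either $F_k \leq e^{-k/4} g_0 D$ or $F_k \leq e^{-k/(8MD)} F_0$, hence their sum. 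The AM--GM step is what your approach lacks: it converts the uncontrolled per-step ratios $g_{i+1}/g_i$ into the single telescoped ratio $g_k/g_0$, which is then tied back to $F_k$.
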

\begin{proof}
	
We denote $F_k \Def F(\xx_k) - F^{\star}$.
Due to convexity of $F(\cdot)$
and monotonicity of the sequence $\{ F_k \}_{k \geq 0}$, we have
\beq \label{FConv}
\ba{rcl}
F_k & \leq & g_k \|\xx_k - \xx^{\star} \| \;\; \leq \;\; g_k D.
\ea
\eeq
By concavity of logarithm, we know that
\beq \label{LogConcave}
\ba{rcl}
\ln\frac{a}{b} & = & \ln a - \ln b 
\;\; \geq \;\; \frac{1}{a}(a - b), \qquad a, b > 0.
\ea
\eeq
Then, for any $0 \leq i < k$, we obtain
\beq \label{PreTelescopeProgress}
\ba{rcl}
\ln \frac{F_i}{F_{i + 1}} & \overset{\eqref{LogConcave}}{\geq} &
\frac{1}{F_i}( F_i - F_{i + 1} )
\;\; \overset{\eqref{OneStepProgress}}{\geq} \;\;
\frac{g_i}{2 \sigma_i F_i} \cdot \bigl[ \frac{g_{i + 1}}{g_i} \bigr]^2  \\
\\
& \overset{\eqref{FConv}}{\geq} &
\frac{1}{2\sigma_i D} \cdot \bigl[ \frac{g_{i + 1}}{g_i} \bigr]^2 
\;\; \geq \;\;
\frac{1}{4MD} \cdot \bigl[ \frac{g_{i + 1}}{g_i} \bigr]^2.
\ea
\eeq
Therefore, telescoping this bound for all iterations $0 \leq i < k$
and using the inequality between arithmetic and geometric means, we get
\beq \label{FkGlobal}
\ba{rcl}
\ln \frac{F_0}{F_k}
& \overset{\eqref{PreTelescopeProgress}}{\geq} & 
\frac{1}{4 M D} \sum\limits_{i = 0}^{k - 1}
\bigl[ \frac{g_{i + 1}}{g_i} \bigr]^2 
\;\; \geq \;\;
\frac{k}{4 M D} 
\biggl[ \,\prod\limits_{i = 0}^{k - 1} \frac{g_{i + 1}}{g_i} \,\biggr]^{2/k} 
\;\; = \;\;
\frac{k}{4 M D} \bigl[ \frac{g_k}{g_0}  \bigr]^{2/k} \\
\\
& = &
\frac{k}{4 M D} 
\exp\Bigl[ \frac{2}{k} \ln\frac{g_k}{g_0}  \Bigr]
\;\; \geq \;\;
\frac{k}{4 M D} \Bigl( 1 + \frac{2}{k} \ln \frac{g_k}{g_0}  \Bigr) 
\;\; \overset{\eqref{FConv}}{\geq} \;\;
\frac{k}{4 M D} \Bigl( 1 + \frac{2}{k} \ln \frac{F_k}{g_0 D}  \Bigr).
\ea
\eeq
Let us consider two cases.
\begin{enumerate} 
	\item We have $\frac{2}{k} \ln \frac{F_k}{g_0 D} \leq - \frac{1}{2}
\;\; \Leftrightarrow \;\; F_k \leq \exp( -k / 4) g_0D$.

	\item Otherwise, $\frac{2}{k} \ln \frac{F_k}{g_0 D} \geq - \frac{1}{2}$,
	substituting which into \eqref{FkGlobal} gives
	$$
	\ba{rcl}
	\ln \frac{F_0}{F_k} & \geq & \frac{k}{8MD}
	\quad \Leftrightarrow \quad
	F_k \;\; \leq \;\; \exp\Bigl(  -\frac{k}{8MD} \Bigr) F_0.
	\ea
	$$
\end{enumerate}
Combining these two cases, we obtain \eqref{PrimalRate}.
\end{proof}

Since our problem class is \textit{scale-invariant},
it is a natural goal to ensure the \textit{relative accuracy} guarantee.
Thus, we are interested to find a point $\xx_k$ such that
\beq \label{RelAccuracyGuarantee}
\ba{rcl}
F(\xx_k) - F^{\star} & \leq & \varepsilon  \cdot \bigl( F(\xx_0) - F^{\star}  \bigr),
\ea
\eeq
for a certain desired $0 < \varepsilon < 1$.
Note that \eqref{RelAccuracyGuarantee}
can be easily translated into \textit{absolute accuracy} for a given tolerance level $\varepsilon'$ by setting $\varepsilon := \frac{\varepsilon'}{F(\xx_0) - F^{\star}}$.
At the same time, our accuracy parameter $\varepsilon$ in \eqref{RelAccuracyGuarantee} 
does not depend on any particular scaling of the objective function,
which can be arbitrary in practice.
Then, directly from Theorem~\ref{TheoremPrimalRate}, we obtain the following
global complexity guarantee.

\begin{corollary}
	The number of iterations of method~\eqref{PrimalNewton}
	 to reach $\varepsilon$-accuracy 
	is bounded as
	\beq \label{PrimalNewtonComplexity}
	\ba{rcl}
	k & \leq & \cO\Bigl(  MD \ln \frac{1}{\varepsilon} + \ln \frac{Dg_0}{\varepsilon (F(\xx_0) - F^{\star})}  \Bigr).
	\ea
	\eeq
\end{corollary}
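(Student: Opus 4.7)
The plan is to derive the corollary by reading off the two exponential terms in the rate \eqref{PrimalRate} separately and requiring each to be below $\frac{\varepsilon}{2}(F(\xx_0) - F^{\star})$. Since the bound in Theorem~\ref{TheoremPrimalRate} is a sum of two terms, forcing the sum to be below $\varepsilon (F(\xx_0) - F^{\star})$ follows from a union-style argument where each summand is individually bounded by half the target.

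First, I would set the first term below $\frac{\varepsilon}{2}(F(\xx_0) - F^{\star})$. Since the first term is $\exp\bigl(-\frac{k}{8MD}\bigr)(F(\xx_0) - F^{\star})$, this reduces to the requirement $\exp(-k/(8MD)) \leq \varepsilon/2$, equivalently $k \geq 8MD\ln(2/\varepsilon)$. Second, I would set the second term below $\frac{\varepsilon}{2}(F(\xx_0) - F^{\star})$. Since the second term is $\exp(-k/4)\,g_0 D$, this yields $k \geq 4\ln\bigl(\frac{2 g_0 D}{\varepsilon (F(\xx_0) - F^{\star})}\bigr)$. Taking the maximum of the two required thresholds, or equivalently summing them up to a constant factor, I obtain an iteration count that is at most the sum of a term of order $MD\ln(1/\varepsilon)$ and a term of order $\ln\bigl(\frac{D g_0}{\varepsilon(F(\xx_0) - F^{\star})}\bigr)$, matching \eqref{PrimalNewtonComplexity}.

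There is no real obstacle here: the argument is a direct inversion of the two exponentials in \eqref{PrimalRate}. The only very minor subtlety is to collect the $\ln 2$ factors into the $\cO(\cdot)$ and to confirm that the hypothesis of Theorem~\ref{TheoremPrimalRate} (namely $\sigma_k \in [0,2M]$ satisfying \eqref{OneStepProgress}) is exactly what the adaptive rule of algorithm~\eqref{PrimalNewton} delivers, which was already verified in the discussion preceding the theorem. Everything else is a bookkeeping step.
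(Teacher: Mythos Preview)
Your proposal is correct and matches the paper's approach exactly: the paper states the corollary follows ``directly from Theorem~\ref{TheoremPrimalRate}'' without giving an explicit argument, and inverting each of the two exponential terms in \eqref{PrimalRate} separately is precisely the intended derivation.
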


We see that the main complexity factor for solving problem \eqref{MainProblem}
with a Quasi-Self-Concordant objective
is the following condition number 
$ \kappa \;\Def \; MD$. Note that this is the same
complexity as we can prove for the trust-region scheme on Hessian-stable functions (see the discussion in the end of previous section).
However, our method is much easier to implement,
using only the basic steps of the form~\eqref{NewtonStep}. 
Moreover, our algorithm admits using a simple adaptive search, which
does not need to know the actual value of parameter $M$.
 
In Sections~\ref{SectionDual} and \ref{SectionAccelerated},
we show how to improve this factor
with the Dual and Accelerated Newton schemes.
The latter method achieves the accelerated complexity $\tilde{\cO}(\kappa^{2/3})$.
The key to our analysis is the \textit{local behaviour}
of our primal Newton method~\eqref{PrimalNewton}, which we 
study in the next section.

\section{Local Quadratic Convergence}
\label{SectionLocal}

Let us assume that $\lambda(\xx_0) > 0$.
Then, we have $\lambda(\xx) > 0$ for all $\xx \in \dom f$
(see Remark~\ref{HessPositiveDef}). 
It is convenient to consider the following scale-invariant measure of convergence:
$$
\ba{rcl}
\eta(\xx) & \Def & \frac{\| F'(\xx) \|_*}{\lambda(\xx)},
\qquad \xx \in Q,
\ea
$$
for a given particular selection of subgradients $F'(\xx) \in \partial F(\xx)$.
We also introduce the following \textit{local region}, for some $0 < c \leq 1$:
\beq \label{LocalRegion}
\ba{rcl}
\mathcal{G}_{c} & \Def & 
\Bigl\{  \xx \in Q \; : \;  \eta(\xx) \leq \frac{c}{M}   \Bigr\}.
\ea
\eeq
Then, for one primal step $\xx \mapsto \xx^+$
of our method~\eqref{PrimalStep} from $\xx \in \mathcal{G}_c$,
and using the Gradient Regularization $\beta = \sigma \| F'(\xx) \|_*$ with
an \textit{arbitrary} $\sigma \geq 0$, we get
\beq \label{LocalStep}
\ba{rcl}
\| F'(\xx^+) \|_* & \overset{\eqref{FPrimeDef}, \eqref{GradBound}}{\leq} &
M \| \xx^+ - \xx\|_{\xx}^2
\cdot \varphi( M\|\xx^+ - \xx \| )
+ \beta \| \xx^+ - \xx \| \\
\\
& \overset{\eqref{StepBound}, \eqref{StepBoundX}}{\leq} &
\frac{M \|F'(\xx)\|^2_*}{\lambda(\xx)}
\cdot \varphi( M \eta(\xx) )
+ \frac{\sigma \|F'(\xx) \|_*^2}{\lambda(\xx)} 
\;\; \overset{\eqref{LocalRegion}}{\leq} \;\;
\frac{\|F'(\xx)\|_*^2}{\lambda(\xx)} \cdot ( \rho M + \sigma ),
\ea
\eeq
where $\rho \Def \varphi(1)$ is an absolute constant \eqref{RhoValue}.
Dividing the left hand side of \eqref{LocalStep} by $\lambda(\xx^+)$, we obtain
\beq \label{LocalMeasureStep}
\ba{rcl}
\eta(\xx^+)
&\overset{\eqref{HessBound}}{\leq} &
\frac{\|F'(\xx^+)\|_*}{\lambda(\xx)} \cdot e^{M\|\xx^+ - \xx\|}
\;\; \overset{\eqref{StepBound}}{\leq} \;\;
\frac{\|F'(\xx^+)\|_*}{\lambda(\xx)} \cdot e^{M \eta(\xx)} \\
\\
& \overset{\eqref{LocalRegion}}{\leq} &
\frac{\|F'(\xx^+)\|_*}{\lambda(\xx)} \cdot e
\;\; \overset{\eqref{LocalStep}}{\leq} \;\;
\eta(\xx)^2 \cdot e( \rho M + \sigma ),
\ea
\eeq
which is the quadratic convergence in terms of our measure $\eta(\xx)$.
Thus, we established the following.

\begin{theorem} \label{TheoremLocalQuadratic}
Let the smooth component of problem \eqref{MainProblem}
be Quasi-Self-Concordant with parameter $M > 0$.
Consider the iterations $\{ \xx_k \}_{k \geq 0}$ of method \eqref{PrimalNewton}
with any $\sigma_k \in [0, 2M]$,
and assume that $\xx_0 \in \mathcal{G}_{1/18}$.
Then, we have the quadratic rate, for $k \geq 0$:
\beq \label{LocalQuadratic}
\ba{rcl}
\eta(\xx_k) & \leq & \frac{1}{9M} \cdot \bigl( \frac{1}{2} \bigr)^{2^k}.
\ea
\eeq
\end{theorem}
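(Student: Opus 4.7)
The plan is that nearly all the work has already been done in deriving the one-step local contraction \eqref{LocalMeasureStep}; what remains is to specialize it to $\sigma_k \in [0, 2M]$ and iterate. First I would note that \eqref{LocalMeasureStep} applies at any $\xx \in \mathcal{G}_c$ with $c \leq 1$ and any $\sigma \geq 0$, giving $\eta(\xx^+) \leq e(\rho M + \sigma) \eta(\xx)^2$. Plugging in $\sigma_k \leq 2M$ and using $\rho = e - 2$ from \eqref{RhoValue} collapses the factor to
$$
e(\rho M + \sigma_k) \;\leq\; e(\rho + 2) M \;=\; e^2 M,
$$
so one step of method \eqref{PrimalNewton} satisfies the clean contraction $\eta(\xx_{k+1}) \leq e^2 M \, \eta(\xx_k)^2$, provided $\xx_k \in \mathcal{G}_c$ for some $c \leq 1$.

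Next I would prove \eqref{LocalQuadratic} by induction on $k$. The base case $k = 0$ is immediate: $\xx_0 \in \mathcal{G}_{1/18}$ means $\eta(\xx_0) \leq \frac{1}{18M} = \frac{1}{9M}\cdot(\tfrac{1}{2})^{2^0}$. For the inductive step, assume $\eta(\xx_k) \leq \frac{1}{9M}(\tfrac{1}{2})^{2^k}$. In particular $\eta(\xx_k) \leq \frac{1}{18M} \leq \frac{1}{M}$, so $\xx_k \in \mathcal{G}_1$ and the one-step bound applies at $\xx_k$. Then
$$
\eta(\xx_{k+1}) \;\leq\; e^2 M \cdot \Bigl(\tfrac{1}{9M}\Bigr)^2 \bigl(\tfrac{1}{2}\bigr)^{2 \cdot 2^k} \;=\; \frac{e^2}{81\, M}\bigl(\tfrac{1}{2}\bigr)^{2^{k+1}} \;\leq\; \frac{1}{9M}\bigl(\tfrac{1}{2}\bigr)^{2^{k+1}},
$$
where the final inequality uses $e^2 \leq 9$. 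This closes the induction and yields \eqref{LocalQuadratic}.

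I do not anticipate any real obstacle: the only mildly delicate point is ensuring that each iterate $\xx_k$ stays inside a region $\mathcal{G}_c$ with $c \leq 1$ so that \eqref{LocalMeasureStep} remains applicable. The inductive hypothesis already handles this, since the established bound $\eta(\xx_k) \leq \frac{1}{18M}$ shows that the entire trajectory never leaves $\mathcal{G}_{1/18}$. The choice of constants $c = 1/18$ and the prefactor $1/(9M)$ in the statement is essentially dictated by the interplay between $e^2$ and the need to have a clean geometric shrinkage by a factor of $1/2$ in the exponent at each step.
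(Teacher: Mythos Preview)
Your proposal is correct and follows essentially the same route as the paper: both use the one-step contraction \eqref{LocalMeasureStep}, bound $e(\rho M + \sigma_k) \leq e(\rho+2)M = e^2 M \leq 9M$ for $\sigma_k \leq 2M$, and then run an induction that simultaneously keeps the iterates inside $\mathcal{G}_{1/18}$ and delivers the squared-exponent decay. The only cosmetic difference is that the paper writes the induction as $(9M\,\eta(\xx_k)) \leq (9M\,\eta(\xx_{k-1}))^2 \leq \dots \leq (9M\,\eta(\xx_0))^{2^k}$, while you carry the factor $e^2 M$ through and use $e^2 \leq 9$ at the end.
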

\begin{proof}
Indeed, for any $k \geq 1$, we have 
$\eta(\xx_{k}) \overset{\eqref{LocalMeasureStep}}{\leq} \eta(\xx_{k - 1})^2 \cdot e(\rho M + \sigma)
\leq \eta(\xx_{k - 1})^2 \cdot 9M$.
Therefore, by induction $\xx_k \in \mathcal{G}_{1/18}$ for all $k \geq 0$, and we get
$$
\ba{rcl}
( \eta(\xx_k) \cdot 9M  ) & \leq & (\eta(\xx_{k - 1}) \cdot 9M  )^2
\;\; \leq \;\; \ldots \;\; \leq \;\;
( \eta(\xx_0) \cdot 9M )^{2^k}.
\ea
$$
Taking into account that $\xx_0 \in \mathcal{G}_{1/18}$ finishes the proof.
\end{proof}

We see that for achieving the local quadratic convergence \eqref{LocalQuadratic}, we can
perform just \textit{pure} Newton steps ($\sigma_k \equiv 0$). 

Let us consider a particular consequence of our theory for minimizing strongly convex functions, 
that will be useful in the next section
for constructing the Dual Newton Method. 
Thus, we assume that the Hessian is uniformly bounded from below, for some $\mu > 0$:
\beq \label{StronglyConvex}
\ba{rcl}
\lambda(\xx) & \geq & \mu, \qquad \forall \xx \in Q,
\ea
\eeq
and consider one primal \textit{pure} Newton step \eqref{PrimalStep} $\xx \mapsto \xx^+$ with $\sigma := 0$ (no regularization).
Then, assuming that point $\xx$ belongs to the following local region,
for some selection of the subgradients:
\beq \label{Local2}
\ba{rcl}
\mathcal{U} & \Def &
\Bigl\{ \xx \in Q \; : \;  \| F'(\xx) \|_*  \leq \frac{\mu}{2M}  \Bigr\},
\ea
\eeq
we get
\beq \label{LocalStep2}
\ba{rcl}
\|F'(\xx^+) \|_*
& \overset{\eqref{FPrimeDef}, \eqref{GradBound}}{\leq} &
M \| \xx^+ - \xx\|_{\xx}^2
\cdot \varphi( M\|\xx^+ - \xx \| ) \\
\\
& \overset{\eqref{StepBound}, \eqref{StepBoundX},\eqref{StronglyConvex}}{\leq}  &
\frac{M \|F'(\xx) \|_*^2}{\mu} \cdot \varphi\Bigl( \frac{M \|F'(\xx)\|_*}{\mu} \Bigr)
\;\; \overset{\eqref{Local2}}{\leq} \;\;
\frac{M \|F'(\xx) \|_*^2}{\mu} \cdot \varphi(1/2).
\ea
\eeq
Thus, we can justify the following quadratic convergence
for our specific case. 

\begin{theorem} \label{TheoremLocal2}
	Let the smooth component of problem \eqref{MainProblem}
	be Quasi-Self-Concordant with parameter $M > 0$
	and strongly convex  with parameter $\mu > 0$.
	Consider the iterations of the pure Newton Method:
	$$
	\ba{rcl}
	\xx_{k + 1} & = & \arg\min\limits_{\yy \in Q}
	\Bigl\{ 
	\la \nabla f(\xx_k), \yy - \xx_k \ra + \frac{1}{2}\|\yy - \xx_k\|_{\xx_k}^2 + \psi(\yy)
	\Bigr\}, \qquad k \geq 0,
	\ea
	$$
	and assume that $\xx_0 \in \mathcal{U}$. Then, we have the quadratic rate, for $k \geq 0$:
	\beq \label{LocalQuadraticRate}
	\ba{rcl}
	\| F'(\xx_k) \|_* & \leq & \frac{\mu}{2M} \cdot \bigl( \frac{1}{e} \bigr)^{2^k}.
	\ea
	\eeq
\end{theorem}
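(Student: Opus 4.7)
The plan is to mirror the strategy of Theorem \ref{TheoremLocalQuadratic}, but using the sharper one-step contraction \eqref{LocalStep2} that becomes available under strong convexity. Note that, since $\sigma_k \equiv 0$ and the composite model is augmented by the strongly convex term $\tfrac{1}{2}\|\yy-\xx\|_{\xx}^2 \geq \tfrac{\mu}{2}\|\yy-\xx\|^2$, every subproblem defining $\xx_{k+1}$ is well-posed, and the identification $F'(\xx^+) = \nabla f(\xx^+) - \nabla f(\xx) - \nabla^2 f(\xx)(\xx^+-\xx) \in \partial F(\xx^+)$ from \eqref{FPrimeDef} (with $\beta=0$) is meaningful.

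First I would show that the local region $\mathcal{U}$ is \emph{forward invariant} under the pure Newton map. Indeed, for any $\xx \in \mathcal{U}$, the bound \eqref{LocalStep2} gives
$$
\|F'(\xx^+)\|_* \;\leq\; \frac{M\varphi(1/2)}{\mu}\,\|F'(\xx)\|_*^{\,2} \;\leq\; \frac{\varphi(1/2)}{2}\,\|F'(\xx)\|_*,
$$
where the last inequality uses $\|F'(\xx)\|_* \leq \mu/(2M)$. Since $\varphi(1/2) < 2$, this keeps $\xx^+ \in \mathcal{U}$, so by a straightforward induction every iterate $\xx_k$ lies in $\mathcal{U}$ and \eqref{LocalStep2} may be applied at every step.

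Next I would iterate the contraction. Introduce the dimensionless residual $u_k \mathrel{\mathop:}= \tfrac{2M}{\mu}\|F'(\xx_k)\|_*$, so $u_0 \leq 1$ and
$$
u_{k+1} \;\leq\; \tfrac{\varphi(1/2)}{2}\, u_k^{\,2}.
$$
Multiplying both sides by $\tfrac{\varphi(1/2)}{2}$ and setting $v_k \mathrel{\mathop:}= \tfrac{\varphi(1/2)}{2}\, u_k$ produces the clean self-similar recurrence $v_{k+1} \leq v_k^{\,2}$, which unfolds to $v_k \leq v_0^{2^{k}}$. Unpacking the substitution and using the numerical estimate $\tfrac{\varphi(1/2)}{2} = 2e^{1/2}-3 \leq 1/e$ delivers a bound of the form $u_k \leq C\, e^{-2^k}$, which matches \eqref{LocalQuadraticRate} after absorbing the absolute constant.

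The work is essentially constant tracking: the conceptual input is entirely contained in \eqref{LocalStep2}, which already folds together the gradient-variation bound \eqref{GradBound}, the step-length bound \eqref{StepBound}, and the Hessian lower bound \eqref{StronglyConvex}. The only nuisance is verifying the numerical inequality $\varphi(1/2)/2 \leq 1/e$ to present the rate with base $1/e$; this is a routine check on the univariate function $\varphi(t) = (e^t-t-1)/t^2$, and presents no real obstacle.
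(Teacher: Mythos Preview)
Your proposal is correct and follows essentially the same route as the paper's proof: the quantity $v_k = \tfrac{\varphi(1/2)}{2}u_k = \tfrac{M\varphi(1/2)}{\mu}\|F'(\xx_k)\|_*$ that you introduce is exactly the rescaled residual the paper iterates, and your numerical check $\varphi(1/2)/2 = 2e^{1/2}-3 \leq 1/e$ is the same as the paper's bound $\varphi(1/2) \leq 2/e$. The only cosmetic difference is that you make the forward-invariance of $\mathcal{U}$ explicit before unrolling the recursion, whereas the paper leaves it implicit.
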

\begin{proof}
	Indeed, for any $k \geq 1$, we have
	$$
	\ba{rcl}
	\frac{M \varphi(1/2) }{\mu} \|F'(\xx_k) \|_*
	& \overset{\eqref{LocalStep2}}{\leq} &
	\Bigl(  \frac{M \varphi(1/2) }{\mu} \|F'(\xx_{k - 1}) \|_* \Bigr)^2
	\;\; \leq \;\; \ldots \;\; \leq \;\;
	\Bigl(  \frac{M \varphi(1/2) }{\mu} \|F'(\xx_{0}) \|_* \Bigr)^{2^k}.
	\ea
	$$
	It remains to notice that $1/2 \leq \varphi(1/2) \leq 2 / e$.
	Using that $\xx_0 \in \mathcal{U}$ completes the proof.
\end{proof}

\section{Dual Newton Method}
\label{SectionDual}

In this section, we discover another possibility for minimizing our objective \eqref{MainProblem}
which is based on inexact \textit{proximal-point}  iterations
\cite{moreau1965proximite,martinet1978perturbation,rockafellar1976augmented,solodov2001unified,salzo2012inexact}.
Thus, at each iteration $k \geq 0$, we 
 minimize the following augmented function:
\beq \label{ProxH}
\ba{rcl}
h_{k} (\xx) & := & F(\xx) + \frac{1}{2a_{k + 1}}\|\xx - \xx_k \|^2,
\ea
\eeq
where $\xx_k$ is the current point and $a_{k + 1} > 0$ is some regularization constant.
Note that the smooth component of \eqref{ProxH}
is Quasi-Self-Concordant with parameter $M$ and 
strongly convex with constant $\mu = \frac{1}{a_{k + 1}}$. Hence, choosing
$$
\boxed{
\ba{rcl}
a_{k + 1} & := & \frac{1}{2 M \| F'(\xx_k) \|_*} 
\ea
}
$$
we ensure that point $\xx_k$ is in the \textit{local region of quadratic convergence} \eqref{Local2}
of the Newton Method, and we can minimize objective \eqref{ProxH} very fast
up to any desirable precision, to obtain the next iterate $\xx_{k + 1} \approx \arg\min\limits_{\xx} h_k(\xx)$. 
This approach 
resembles the standard Path-Following scheme for minimizing the classic Self-Concordant functions \cite{nesterov2018lectures}.
It was also explored recently in \cite{doikov2021local}
as an application of high-order Tensor Methods.

\beq\label{DualNewton}
\ba{|c|}
\hline\\
\quad \mbox{\bf Dual Newton Method $\mathscr{D}(f, \psi, \xx_0, M, \nu)$} \quad\\
\\
\hline\\
\ba{l}
\mbox{\textbf{Initialize}
	$g_0 = \| F'(\xx_0) \|_{*}$ for $F'(\xx_0) \in \partial F(\xx_0)$.} \\[10pt]
\mbox{\bf For $k \geq 0$ iterate:}\\[10pt]
\mbox{1. Set $\zz_0 = \xx_k$.} \\[10pt]
\mbox{2. \textbf{For $t \geq 0$ iterate:}} \\[10pt]
\mbox{$\quad\;$2-a. Compute} \\[5pt]
$$
\ba{rcl}
\qquad \zz_{t + 1} & = & \arg\min\limits_{\yy \in Q}
\Bigl\{  
\la \nabla f(\zz_t), \yy - \zz_t \ra
+ \frac{1}{2} \| \yy - \zz_t \|_{\zz_t}^2
+ Mg_k \|\yy - \xx_k \|^2 + \psi(\yy)
\Bigr\}.
\ea
$$ \\[10pt]
\mbox{$\quad\;$2-b. Set $\ss_{t + 1} = 
	\nabla f(\zz_{t + 1}) - \nabla f(\zz_t) 
	- \nabla^2 f(\zz_t)(\zz_{t + 1} - \zz_t).$ } \\[10pt]
\mbox{$\quad\;$\textbf{Until $\| \ss_{t + 1}\|_* \leq \frac{2M g_k \nu}{(k + 1)^2}.$}} \\[10pt]
\mbox{3. Set $\xx_{k + 1} = \zz_{t + 1}$ and $g_{k + 1} = \| \ss_{t + 1} - 2Mg_k B(\xx_{k + 1} - \xx_{k})\|_*$.} \\[10pt]
\mbox{4. If $g_{k + 1} \leq \nu$ then \textbf{return} $\xx_{k + 1}$.} \\
\\
\ea\\
\hline
\ea
\eeq

In this algorithm, we need to pass the parameter of Quasi-Self-Concordance $M$
for the smooth component of the objective.
The method returns
a point $\bar{\xx} \in Q$ with a desired bound for the (sub)gradient norm: $\| F'(\bar{\xx}) \|_* \leq \nu$, where $\nu > 0$ is a given tolerance.

Note that the first iteration $\zz_0 \mapsto \zz_1$ of the inner loop in \eqref{DualNewton}
is equivalent to one step of the primal method.
However, we do several iterations of the inner loop until we find
a good minimizer of the augmented objective,
that has a small gradient norm:
$
\| h'(\xx_{k + 1}) \|_* \leq \frac{2Mg_k \nu}{(k + 1)^2}.
$

It appears that the resulting Dual Newton Method
is very suitable for minimizing the dual objects (the gradients)
and importantly it provides us with a \textit{hot-start} possibility, which we use
extensively
for building our accelerated method in the next section.
We establish the following global guarantee.

\begin{lemma} \label{LemmaDualNewton}
	For the iterations of method \eqref{DualNewton}, we have
	\beq \label{DualNewtonGuarantee}
	\ba{rcl}
	\sum\limits_{i = 1}^k a_i \bigl(F(\xx_i) - F^{\star} \bigr)
	+ \frac{1}{2}\sum\limits_{i = 1}^k a_i^2 \| F'(\xx_i)\|_*^2
	& \leq & \frac{1}{2}(\|\xx_0 - \xx^{\star}\| + 2\nu  )^2,
	\qquad k \geq 1.
	\ea
	\eeq
\end{lemma}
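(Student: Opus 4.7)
The plan is to recognize each outer iteration of \eqref{DualNewton} as an \emph{inexact proximal step} for $F$ and then run the standard inexact proximal-point analysis. The augmented objective $h_k(\xx) = F(\xx) + Mg_k\|\xx - \xx_k\|^2 = F(\xx) + \frac{1}{2a_{k+1}}\|\xx - \xx_k\|^2$ has a smooth part that is Quasi-Self-Concordant with parameter $M$ (by the sum rule) and strongly convex with constant $\mu = 2Mg_k = 1/a_{k+1}$. The starting point $\zz_0 = \xx_k$ satisfies $\|h_k'(\zz_0)\|_* = g_k = \mu/(2M)$, so it lies in the local region~\eqref{Local2} for $h_k$, and Theorem~\ref{TheoremLocal2} guarantees that the inner loop terminates. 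Matching the inner Newton subproblem with the generic step~\eqref{PrimalStep} (applied to $h_k$ with $\beta = 0$) and using the analog of \eqref{StatCond}--\eqref{FPrimeDef}, one verifies that $\ss_{t+1} = h_k'(\xx_{k+1})$, so that $g_{k+1} = \|F'(\xx_{k+1})\|_*$, and that $u_{k+1} \Def h_k'(\xx_{k+1})$ satisfies the key proximal identity
\[
F'(\xx_{k+1}) + \tfrac{1}{a_{k+1}} \mat{B}(\xx_{k+1} - \xx_k) \;=\; u_{k+1}, \qquad \Delta_{k+1} \Def a_{k+1}\|u_{k+1}\|_* \leq \tfrac{\nu}{(k+1)^2}.
\]

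Next, I would take the inner product of the above identity with $\xx_{k+1} - \xx^{\star}$, apply convexity $F(\xx_{k+1}) - F^{\star} \leq \la F'(\xx_{k+1}), \xx_{k+1} - \xx^{\star}\ra$, and use the three-point identity
\[
\la \mat{B}(\xx_{k+1} - \xx_k), \xx_{k+1} - \xx^{\star}\ra \;=\; \tfrac{1}{2}\bigl(\|\xx_{k+1} - \xx_k\|^2 + \|\xx_{k+1} - \xx^{\star}\|^2 - \|\xx_k - \xx^{\star}\|^2\bigr).
\]
To bring $\|F'(\xx_{k+1})\|_*^2$ into play, I would then expand $\|\xx_{k+1} - \xx_k\|^2 = a_{k+1}^2\|u_{k+1} - F'(\xx_{k+1})\|_*^2$ and split $\xx_{k+1} - \xx^{\star} = (\xx_{k+1} - \xx_k) + (\xx_k - \xx^{\star})$. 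A cross term $a_{k+1}^2 \la u_{k+1}, \mat{B}^{-1} F'(\xx_{k+1})\ra$ appears twice with opposite signs and cancels, and after Cauchy--Schwarz on the remaining error, I expect to land on the clean per-iteration bound
\[
a_{k+1}\bigl(F(\xx_{k+1}) - F^{\star}\bigr) + \tfrac{1}{2}\|\xx_{k+1} - \xx^{\star}\|^2 + \tfrac{1}{2}a_{k+1}^2 \|F'(\xx_{k+1})\|_*^2 \;\leq\; \tfrac{1}{2}\bigl(\|\xx_k - \xx^{\star}\| + \Delta_{k+1}\bigr)^2.
\]

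Finally, I would telescope this inequality. Dropping the positive term $\frac{1}{2}\|\xx_{k+1} - \xx^{\star}\|^2$ gives $D_{k+1} \leq D_k + \Delta_{k+1}$ with $D_i \Def \|\xx_i - \xx^{\star}\|$, hence $D_i \leq D_0 + S_i$ where $S_i = \sum_{j=1}^i \Delta_j$. Summing the full per-iteration bound and rewriting its right-hand side as $\frac{1}{2}D_k^2 - \frac{1}{2}D_{k-1}^2 + D_{k-1}\Delta_k + \frac{1}{2}\Delta_k^2$ produces cross terms $\sum_{i=1}^k D_{i-1}\Delta_i$ and square terms $\frac{1}{2}\sum_i\Delta_i^2$. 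The Abel identity $\sum_{1\leq j<i\leq k}\Delta_i\Delta_j = \frac{1}{2}(S_k^2 - \sum_i\Delta_i^2)$ makes the $\Delta_i^2$ contributions cancel exactly, yielding
\[
\sum_{i=1}^k a_i\bigl(F(\xx_i) - F^{\star}\bigr) + \tfrac{1}{2}\sum_{i=1}^k a_i^2\|F'(\xx_i)\|_*^2 \;\leq\; \tfrac{1}{2}\bigl(\|\xx_0 - \xx^{\star}\| + S_k\bigr)^2,
\]
and the claim follows from the numerical bound $S_k \leq \nu\sum_{i\geq 1}\frac{1}{i^2} = \frac{\pi^2\nu}{6} < 2\nu$.

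The main obstacle is the per-iteration inequality: a loose handling of the errors would leave an extra factor of $2$ on $\frac{1}{2}a_{k+1}^2\|F'(\xx_{k+1})\|_*^2$ or an unwanted residual $\Theta(\|u_{k+1}\|_*^2)$ on the right-hand side, ruining the clean perfect square. The crucial observation is that the specific cross term generated by expanding $\|u_{k+1} - F'(\xx_{k+1})\|_*^2$ matches, with the opposite sign, the cross term coming from writing $\la u_{k+1}, \xx_{k+1}-\xx^{\star}\ra = \la u_{k+1}, \xx_{k+1}-\xx_k\ra + \la u_{k+1}, \xx_k - \xx^{\star}\ra$; once this cancellation is identified, the rest is routine algebra followed by Abel summation.
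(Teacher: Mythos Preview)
Your proposal is correct and follows essentially the same route as the paper. Two minor remarks: (i) the ``cancellation'' you highlight is just the expansion of $\|a_{k+1}F'(\xx_{k+1})+\mat{B}(\xx_{k+1}-\xx_k)\|_*^2\le\xi_{k+1}^2$ read backwards---the paper squares this stopping condition directly to get the per-step bound in one line, so your detour through $\|\xx_{k+1}-\xx_k\|^2=a_{k+1}^2\|u_{k+1}-F'(\xx_{k+1})\|_*^2$ is unnecessary; (ii) your Abel-summation telescoping is exactly the paper's two nested inductions written in closed form (and note a sign slip: you want $\tfrac12 D_{k-1}^2-\tfrac12 D_k^2$, not the reverse).
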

\begin{proof}
	Let us prove by induction the following bound, for all $k \geq 0$:
	\beq \label{DualInduct}
	\ba{rcl}
	\frac{1}{2}\|\xx_0 - \xx^{\star}\|^2
	+ \sum\limits_{i = 1}^k a_i F^{\star} 
	+ C_k
	& \geq &	 
	\frac{1}{2}\| \xx_k - \xx^{\star}\|^2
	+ \sum\limits_{i = 1}^k a_i F(\xx_i)
	+ \frac{1}{2}\sum\limits_{i = 1}^k a_i^2 \|F'(\xx_i)\|_*^2,
	\ea
	\eeq
	where
	$$
	\ba{rcl}
	C_k & \Def & 
	\frac{1}{2} \sum\limits_{i = 1}^k \xi_i^2 + \sum\limits_{i = 1}^k \xi_i \|\xx_{i - 1} - x^{\star} \|,
	\qquad 
	\xi_i \;\; \Def \;\; \frac{\eta}{i^2}.
	\ea
	$$
	Inequality \eqref{DualInduct} obviously holds for $k = 0$. Assume that it holds for some $k \geq 0$ and consider
	one iterations of the method. We have
	\beq \label{DualOneStep}
	\ba{cl}
	& \frac{1}{2}\|\xx_0 - \xx^{\star}\|^2 + \sum\limits_{i = 1}^{k + 1} a_i F^{\star} \\[10pt]
	& \overset{\eqref{DualInduct}}{\geq} \;\;
	\frac{1}{2}\| \xx_k - \xx^{\star} \|^2 + a_{k + 1} F^{\star} + \sum\limits_{i = 1}^k a_i F(\xx_i)
	+ \frac{1}{2}\sum\limits_{i = 1}^k a_i^2 \|F'(\xx_i) \|_*^2 - C_k.
	\ea
	\eeq
	Note that the stopping condition for the inner loop in Step 2-c ensures that
	\beq \label{DualDeltaBound}
	\ba{rcl}
	\frac{\xi_{k + 1}}{a_{k + 1}} & \geq & \| \ss_{t + 1} \|_*
	\;\; = \;\; \| F'(\xx_{k + 1}) + \frac{1}{a_{k + 1}} B(\xx_{k + 1} - \xx_k) \|_*.
	\ea
	\eeq
	Taking square of both sides of \eqref{DualDeltaBound} and rearranging the terms, we obtain
	\beq \label{DualStepProgress}
	\ba{rcl}
	a_{k + 1} \la F'(\xx_{k + 1}), \xx_k - \xx_{k + 1} \ra & \geq & 
	\frac{a_{k + 1}^2}{2}\| F'(\xx_{k + 1}) \|_*^2
	+ \frac{1}{2}\| \xx_{k} - \xx_{k + 1} \|^2 - \frac{1}{2} \xi_{k + 1}^2
	\ea
	\eeq
	Therefore, we can estimate the first two terms in the right hand side of \eqref{DualOneStep},
	as follows
	$$
	\ba{cl}
	& \frac{1}{2}\|\xx_k - \xx^{\star}\|^2 + a_{k + 1} F^{\star}
	\;\; \geq \;\;
	\frac{1}{2}\| \xx_k - \xx^{\star} \|^2 + a_{k + 1} \bigl[ F(\xx_{k + 1}) + \la F'(\xx_{k + 1}), \xx^{\star} - \xx_{k + 1} \ra \bigr] \\[10pt]
	& \; = \;\,
	\frac{1}{2}\|\xx_{k + 1} - \xx^{\star}\|^2 + a_{k + 1} F(\xx_{k + 1}) + a_{k + 1} \la F'(\xx_{k + 1}), \xx_k - \xx_{k + 1} \ra \\[10pt]
	& \qquad \qquad + \; \frac{1}{2} \|\xx_{k} - \xx_{k + 1} \|^2 + \la B(\xx_k - \xx_{k + 1}), \xx_{k + 1} - \xx^{\star} \ra
	+ a_{k + 1} \la F'(\xx_{k + 1}), \xx^{\star} - \xx_k \ra \\[10pt]
	& \overset{\eqref{DualStepProgress}}{\geq} \;
	\frac{1}{2}\|\xx_{k + 1} - \xx^{\star}\|^2 + a_{k + 1} F(\xx_{k + 1})
	+ \frac{a_{k + 1}^2}{2} \|F'(\xx_{k + 1}) \|_*^2 - \frac{1}{2} \xi_{k + 1}^2 \\[10pt]
	& \qquad \qquad + \;
	\la a_{k + 1} F'(\xx_{k + 1}) + B(\xx_{k + 1} - \xx_k), \xx^{\star} - \xx_k \ra \\[10pt]
	& \overset{\eqref{DualDeltaBound}}{\geq} \;
	\frac{1}{2}\|\xx_{k + 1} - \xx^{\star}\|^2 + a_{k + 1} F(\xx_{k + 1})
	+ \frac{a_{k + 1}^2}{2} \|F'(\xx_{k + 1}) \|_*^2 - \frac{1}{2} \xi_{k + 1}^2 - \xi_k \|\xx_{k} - \xx^{\star}\|.
	\ea
	$$
	Combining it with \eqref{DualOneStep}, we obtain \eqref{DualInduct} for the next iterate.
	Thus, \eqref{DualInduct} is proven for all $k \geq 0$.
	
	Now, let us denote $R_k \Def \|\xx_0 - \xx^{\star}\| + \sum_{i = 1}^k \xi_i$, and prove by induction that
	\beq \label{DualRBound}
	\ba{rcl}
	\frac{1}{2}\|\xx_0 - \xx^{\star}\|^2 + C_k & \leq & \frac{1}{2} R_k^2,
	\ea
	\eeq
	which trivially holds for $k = 0$. Assume that \eqref{DualRBound} holds for some $k \geq 0$. Then, from \eqref{DualInduct}
	we have 
	\beq \label{DualRkBound}
	\ba{rcl}
	\| \xx_k - \xx^{\star}\| & \leq & R_k,
	\ea
	\eeq
	and
	$$
	\ba{rcl}
	\frac{1}{2}\|\xx_0 - \xx^{\star}\|^2 + C_{k + 1} & = & 
	\frac{1}{2}\|\xx_0 - \xx^{\star}\|^2 + C_{k} + \frac{1}{2} \xi_{k + 1}^2 + \xi_{k + 1} \|\xx_k - \xx^{\star} \| \\[10pt]
	& \overset{\eqref{DualRBound},\eqref{DualRkBound}}{\leq} &
	\frac{1}{2} R_k^2 + \frac{1}{2} \xi_{k + 1}^2 + \xi_{k + 1} R_k \;\; = \;\; \frac{1}{2} ( R_{k} + \xi_{k + 1} )^2 
	\;\; = \;\; \frac{1}{2}R_{k + 1}^2,
	\ea
	$$
	which proves \eqref{DualRBound} for all $k \geq 0$. Thus, we justified that
	$$
	\ba{rcl}
	\sum\limits_{i = 1}^k a_{i + 1} ( F(\xx_i) - F^{\star} ) + \frac{1}{2} \sum\limits_{i = 1}^k a_i^2 \| F'(\xx_i) \|^2
	& \overset{\eqref{DualInduct}, \eqref{DualRBound}}{\leq} &  \frac{1}{2} R_k^2.
	\ea
	$$
	It remains to notice that $R_k  \leq  \|\xx_0 - \xx^{\star}\| +  2\nu$, $\forall k \geq 0$.
\end{proof}

From inequality \eqref{DualNewtonGuarantee}, we see that the method has a convergence
both in terms of the functional residual and in terms of the gradient norm.
We prove the following theorem.

\begin{theorem} \label{TheoremDualNewtonConvergence}
	Let the smooth component of problem~\eqref{MainProblem}
	be Quasi-Self-Concordant with parameter $M > 0$.
	Then, for the iterations of method~\eqref{DualNewton}
	with tolerance $\nu > 0$,
	we have the \underline{global linear rate}:
	\beq \label{DualNewtonRate}
	\ba{rcl}
	\| F'(\xx_k) \|_* & \leq & 
	\exp\Bigl( 
	2M^2 ( \|\xx_0 - \xx^{\star} \| + 2\nu )^2 - \frac{k}{2}
	\Bigr) \| F'(\xx_0) \|_*, \qquad k \geq 1.
	\ea
	\eeq
	Moreover, during the first $k$ iterations of the method,
	the total number $N_k$ of second-order oracle calls is bounded as
	\beq \label{DualNewtonTotal}
	\ba{rcl}
	N_k & \leq & k \cdot \Bigl(1 + \frac{1}{\ln2} \ln \ln \frac{(k + 1)^2}{2M\nu} \Bigr).
	\ea
	\eeq
\end{theorem}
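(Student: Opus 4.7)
The proof splits into two independent parts: the global linear rate for the gradient norms, and the bound on the total number of inner second-order oracle calls.

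For the linear rate, my plan is to discard the functional-residual term in Lemma~\ref{LemmaDualNewton} and keep only
$$
\frac{1}{2}\sum_{i=1}^k a_i^2 \|F'(\xx_i)\|_*^2 \;\leq\; \frac{1}{2}R^2, \qquad R \Def \|\xx_0 - \xx^{\star}\| + 2\nu.
$$
Substituting the method's choice $a_i = 1/(2Mg_{i-1})$ with $g_j \Def \|F'(\xx_j)\|_*$ turns this into $\sum_{i=1}^k (g_i/g_{i-1})^2 \leq 4M^2R^2$. Setting $r_i \Def \ln(g_i/g_{i-1})$ and applying the elementary inequality $e^{2r} \geq 1 + 2r$ termwise gives
$$
4M^2R^2 \;\geq\; \sum_{i=1}^k e^{2r_i} \;\geq\; k + 2\sum_{i=1}^k r_i \;=\; k + 2\ln\frac{g_k}{g_0},
$$
and rearranging yields exactly \eqref{DualNewtonRate}. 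This step is essentially algebraic once Lemma~\ref{LemmaDualNewton} is in hand.

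For the bound on $N_k$, my plan is to recognise the inner loop at outer iteration $k$ as the pure Newton method of Section~\ref{SectionLocal} applied to the augmented objective $h_k(\xx) = F(\xx) + Mg_k\|\xx - \xx_k\|^2$ (i.e.\ $1/(2a_{k+1}) = Mg_k$). Expanding $Mg_k\|\yy - \xx_k\|^2$ around $\zz_t$ rewrites Step~2-a as the pure Newton subproblem for the smooth component $\tilde f(\xx) \Def f(\xx) + Mg_k\|\xx - \xx_k\|^2$ of $h_k$, and a direct calculation shows that the quantity $\ss_{t+1}$ from Step~2-b coincides with the canonical selection $h_k'(\zz_{t+1})$ in \eqref{FPrimeDef}. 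The function $\tilde f$ is Quasi-Self-Concordant with the same constant $M$ (adding a convex quadratic does not change $M$) and strongly convex with $\mu \Def 2Mg_k$. The initial point satisfies $\|h_k'(\zz_0)\|_* = \|F'(\xx_k)\|_* = g_k = \mu/(2M)$, so $\zz_0 \in \mathcal{U}$, and Theorem~\ref{TheoremLocal2} applied to $h_k$ gives doubly-exponential contraction $\|\ss_t\|_* \leq g_k \cdot e^{-2^t}$. Inverting the stopping criterion $\|\ss_t\|_* \leq 2Mg_k\nu/(k+1)^2$ yields the requirement $2^t \geq \ln((k+1)^2/(2M\nu))$, so each outer iteration needs at most $1 + \tfrac{1}{\ln 2}\ln\ln((k+1)^2/(2M\nu))$ inner steps; summing over the first $k$ outer iterations produces \eqref{DualNewtonTotal}.

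The one delicate point I expect to have to address carefully is the borderline case $\|h_k'(\zz_0)\|_* = \mu/(2M)$: Theorem~\ref{TheoremLocal2} is stated with a non-strict inequality at the boundary, and I must verify that its inductive contraction survives there. This reduces to noting that $\varphi(1/2) < 2/e$ gives strict contraction factor $\varphi(1/2)/2 < 1/e$ in the recursion driving the theorem, so even starting at the boundary one still obtains geometric (in fact doubly-exponential) decrease of $\|\ss_t\|_*$. Once that is secured, the remainder is routine bookkeeping.
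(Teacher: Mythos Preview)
Your proposal is correct and follows essentially the same approach as the paper's own proof. The only difference is cosmetic: for the linear-rate part you bound $\sum_i e^{2r_i}$ from below via $e^{2r}\geq 1+2r$ applied termwise, whereas the paper first applies AM--GM to get $\sum_i e^{2r_i}\geq k\,e^{(2/k)\sum_i r_i}$ and then uses $e^x\geq 1+x$; both routes land on the identical inequality $4M^2R^2\geq k+2\ln(g_k/g_0)$, and your path is in fact slightly shorter. For the inner-loop bound your argument is the same as the paper's (invoke Theorem~\ref{TheoremLocal2} on $h_k$ with $\mu=2Mg_k$ and invert the stopping rule), and your explicit verification that Step~2-a is the pure Newton step for $h_k$ and that $\ss_{t+1}=h_k'(\zz_{t+1})$, together with your handling of the boundary case $\|h_k'(\zz_0)\|_*=\mu/(2M)$, is more careful than the paper's presentation.
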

\begin{proof}
	Estimate \eqref{DualNewtonRate} follows immediately from \eqref{DualNewtonGuarantee}
	and our choice of parameter $a_{k + 1} = \frac{1}{2M \|F'(\xx_k)\|_*}$.
	Indeed, using the inequality between
	arithmetic and geometric means, we get
	$$
	\ba{rcl}
	\frac{1}{2}( \|\xx_0 - \xx^{\star}\| + 2\nu )^2 
	& \overset{\eqref{DualNewtonGuarantee}}{\geq} &
	\frac{1}{8M^2} \sum\limits_{i = 1}^k \frac{\| F'(\xx_i)\|_*^2}{\| F'(\xx_{i - 1})\|_*^2} 
	\;\; \geq \;\;
	\frac{k}{8M^2} \biggl[ 
	\prod\limits_{i = 1}^k \frac{\|F'(\xx_i)\|_*}{\| F'(\xx_{i - 1})\|_*}  \biggr]^{2 / k} \\
	\\
	& = &
	\frac{k}{8M^2} \Bigl[  \frac{\| F'(\xx_k) \|_*}{\| F'(\xx_0) \|_*} \Bigr]^{2/k}
	\;\; = \;\;
	\frac{k}{8M^2} \exp\Bigl[ \frac{2}{k}\ln \frac{\| F'(\xx_k) \|_*}{\| F'(\xx_0) \|_*} \Bigr] \\
	\\
	& \geq & 
	\frac{k}{8M^2} \Bigl( 1 +  \frac{2}{k}\ln \frac{\| F'(\xx_k) \|_*}{\| F'(\xx_0) \|_*}  \Bigr)
	\;\; = \;\;
	\frac{k}{8M^2} + \frac{1}{4M^2}\ln \frac{\| F'(\xx_k) \|_*}{\| F'(\xx_0) \|_*},
	\ea
	$$
	which is \eqref{DualNewtonRate}.
	
	For iteration $i \geq 0$, let us denote by $t_i \geq 1$
	the corresponding number of second-order oracle calls (executions of Step 2-a of the method).
	Then, due to the local quadratic convergence (Theorem~\ref{TheoremLocal2}), it is enough to ensure
	$$
	\ba{rcl}
	\frac{\mu}{2M} \cdot \bigl( \frac{1}{e} \bigr)^{2^{t_i}}
	\;\; = \;\; \frac{1}{2M a_{k + 1}} \cdot \bigl( \frac{1}{e} \bigr)^{2^{t_i}}
	& \leq & \delta_{i + 1} \;\; = \;\; \frac{\nu}{(i + 1)^2 a_{i + 1}},
	\ea
	$$
	which is satisfied for $t_i \geq \frac{1}{\ln 2} \ln \ln \frac{(i + 1)^2}{\nu}$.
	Therefore, the total number of second-order oracle calls $N_k$
	during the first $k$ iterations is bounded as
	$$
	\ba{rcl}
	N_k & = & \sum\limits_{i = 1}^k t_i
	\;\; \leq \;\; \sum\limits_{i = 1}^k
	\Bigl( 1 + \frac{1}{\ln 2} \ln \ln \frac{(i + 1)^2}{\nu} \Bigr)
	\;\; \leq \;\; k \cdot \Bigl( 1 + \frac{1}{\ln 2} \ln \ln \frac{(k + 1)^2}{\nu}  \Bigr),
	\ea
	$$
	which completes the proof.
\end{proof}

\begin{corollary}
	The number of iterations of method~\eqref{DualNewton}
	to reach $\| F'(\xx_k) \|_* \leq \nu$ is bounded as
	\beq \label{DualNewtonComplexity}
	\ba{rcl}
	k & \leq & 
	\cO\Bigl( 
	(M \|\xx_0 - \xx^{\star}\|)^2 + (M\nu)^2 + \ln \frac{ \|F'(\xx_0)\|_* }{\nu}	
	\Bigr).
	\ea
	\eeq
\end{corollary}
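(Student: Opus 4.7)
The plan is to deduce this complexity bound as a direct consequence of the global linear rate \eqref{DualNewtonRate} established in Theorem~\ref{TheoremDualNewtonConvergence}. Since the stopping criterion of method \eqref{DualNewton} (Step 4) is $g_{k+1} \leq \nu$, it suffices to bound the number of outer iterations needed until the right hand side of \eqref{DualNewtonRate} falls below $\nu$.

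First, I would impose the sufficient condition
\[
\exp\Bigl( 2M^2 (\|\xx_0 - \xx^{\star}\| + 2\nu)^2 - \tfrac{k}{2} \Bigr)\,\|F'(\xx_0)\|_* \;\leq\; \nu,
\]
which after taking logarithms is equivalent to
\[
\tfrac{k}{2} \;\geq\; 2M^2 (\|\xx_0 - \xx^{\star}\| + 2\nu)^2 + \ln\tfrac{\|F'(\xx_0)\|_*}{\nu}.
\]
Next, I would apply the elementary inequality $(a+b)^2 \leq 2a^2 + 2b^2$ to split the quadratic term into $4M^2 \|\xx_0 - \xx^{\star}\|^2 + 16 M^2 \nu^2$, which matches the two squared terms $(M\|\xx_0-\xx^\star\|)^2$ and $(M\nu)^2$ appearing in the statement up to absolute constants hidden in the $\cO(\cdot)$.

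Putting these bounds together yields
\[
k \;\leq\; \cO\Bigl( (M\|\xx_0 - \xx^{\star}\|)^2 + (M\nu)^2 + \ln\tfrac{\|F'(\xx_0)\|_*}{\nu} \Bigr),
\]
as claimed. There is essentially no obstacle here since everything reduces to solving for $k$ in an explicit exponential inequality; the only minor care required is in absorbing the mixed cross term from expanding $(\|\xx_0-\xx^\star\| + 2\nu)^2$ into the two pure squared terms, which the standard AM-GM-style bound handles directly.
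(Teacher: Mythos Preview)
Your proposal is correct and matches the paper's approach: the corollary is stated immediately after Theorem~\ref{TheoremDualNewtonConvergence} without a separate proof, precisely because it follows by setting the right hand side of \eqref{DualNewtonRate} below $\nu$, solving for $k$, and splitting the squared term exactly as you describe.
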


It is important that 
we obtained \textit{explicit distance} from the initial point to the solution $\| \xx_0 - \xx^{\star}\|$
in the right hand side of \eqref{DualNewtonComplexity},
as opposed to the primal method, where we had the diameter of the initial sublevel set $D$
(see \eqref{PrimalNewtonComplexity}).
Although, we have square of the condition number $\kappa^{2}$ in \eqref{DualNewtonComplexity},
the \textit{hot-start} possibility will be very useful for us to build the accelerated scheme
in the next section,
which achieves the improved complexity factor $\kappa^{2/3}$.

A similar observation was also made in \cite{kovalev2022first} 
(for different high-order methods),
where the presence of the explicit distance to the solution helped
to prove the optimal accelerated complexity.

\section{Accelerated Newton Scheme}
\label{SectionAccelerated}

Let us present our accelerated second-order scheme,
which is based on \textit{contracting proximal-point} iterations \cite{doikov2020contracting}.
In our algorithm, we have two sequences of points: $\{ \xx_k \}_{k \geq 0}$
(the main sequence), and $\{ \vv_k \}_{k \geq 0}$ (the proximal sequence).
Let us fix a \textit{contraction parameter} $\gamma \in (0, 1)$ and the following sequences,
for $k \geq 1$:
\beq \label{AkGrowth}
\ba{rcl}
A_k & := &  A_0(1 - \gamma)^{-k} \;\; \geq \;\; A_0 e^{k \gamma},
\qquad \text{and} \qquad
a_k \;\; :=  \;\; A_k - A_{k - 1} \;\; = \;\; \frac{\gamma}{1 - \gamma}A_{k - 1},
\ea
\eeq
where $A_0 > 0$ is some initial constant. Hence, 
we have $\frac{a_k}{A_k} \equiv \gamma$, $\forall k \geq 1$. The exponential
grows of $A_k$ will determine the convergence rate of our method.

At iteration $k \geq 1$ of our accelerated scheme, we minimize the following augmented
objective with \textit{contracted}
smooth part (compare with \eqref{ProxH}):
\beq \label{ContrProxH}
\ba{rcl}
h_{k}(\xx) & := & 
\underbrace{ \textstyle
	A_{k + 1}f\bigl( \gamma \xx + (1 - \gamma) \xx_k  \bigr)}_{\Def \;\ \bar{f_k}(\xx)}
\; + \; 
\underbrace{ \textstyle
	a_{k + 1} \psi(\xx) + \frac{1}{2}\| \xx - \vv_k \|^2}_{\Def \; \bar{\psi}_k(\xx)}.
\ea
\eeq
Even though this objective is strongly convex,
it is difficult to control the region of its local quadratic convergence
due the contraction of the smooth part. Instead, for minimizing \eqref{ContrProxH}, we apply
already developed Dual Newton Method~\eqref{DualNewton}
from the previous section as the inner procedure.

Note that the smooth part $\bar{f}_k(\cdot)$ of \eqref{ContrProxH} is Quasi-Self-Concordant
with parameter $\gamma M$.
Let us denote by $\vv_{k + 1}^{\star} \Def \arg\min_{\xx} h_k(\xx)$
its exact minimum, and $\vv_{k + 1} \approx \vv_{k + 1}^{\star}$
is a point that we employ in our scheme, with the following guarantee,
for some tolerance parameter $\nu_{k + 1} > 0$:
\beq \label{InexactVk1}
\ba{rcl}
\| h_k'(\vv_{k + 1}) \|_* & \leq & \nu_{k + 1}.
\ea
\eeq
Since the function $h_k(\cdot)$ is $1$-strongly convex, we have
$$
\ba{rcl}
\| \vv_{k + 1} - \vv_{k + 1}^{\star} \|^2 & \leq &
\la h_k'(\vv_{k + 1}), \vv_{k + 1} - \vv_{k + 1}^{\star} \ra
\;\; \overset{\eqref{InexactVk1}}{\leq} \;\; 
\nu_{k + 1} \|\vv_{k + 1} - \vv_{k + 1}^{\star} \|.
\ea
$$
Thus, $\| \vv_{k + 1} - \vv_{k + 1}^{\star} \| \leq \nu_{k + 1}$ and
\beq \label{AccSolBound}
\ba{rcl}
\|\vv_k - \vv_{k + 1}^{\star}\|  & \leq & 
\| \vv_k - \vv_{k + 1} \| + \nu_{k + 1}.
\ea
\eeq
Therefore,
minimizing~\eqref{ContrProxH} by the Dual Newton Method
and using $\vv_{k}$ as the initial point,
$$
\boxed{
\ba{rcl}
\vv_{k + 1} & := & \mathscr{D}( \bar{f}_k, \bar{\psi}_k, \vv_k, \gamma M, \nu_{k + 1}),
\ea
}
$$
we reach guarantee \eqref{InexactVk1} with
the following number of iteration of our inner method:
\beq \label{InnerMethodIters}
\ba{rcl}
I_k & \overset{\eqref{DualNewtonComplexity}}{\leq} &
\cO\Bigl( 
\gamma^2 M^2 \| \vv_k - \vv_{k + 1} \|^2
+ \gamma^2 M^2 \nu_{k + 1}^2 + \ln \frac{\| h'(\vv_k) \|_*}{\nu_{k + 1}}
\Bigr),
\ea
\eeq
which we are able to effectively control. 
Thus, we come to our accelerated method.

\beq\label{AcceleratedNewton}
\ba{|c|}
\hline\\
\quad \mbox{\bf Accelerated Newton Scheme} \quad\\
\\
\hline\\
\ba{l}
\mbox{{\bf Choose} $\xx_0 \in Q$, $R > 0$, $A_0 > 0$, and $\gamma \in (0, 1)$. Set $\vv_0 = \xx_0$.} \\
\\
\mbox{\bf For $k \geq 0$ iterate:}\\[10pt]
\mbox{1. Set $A_{k + 1} = \frac{1}{1 - \gamma} A_k$, 
				$a_{k + 1} = \frac{\gamma}{1 - \gamma}A_k$, 
	             and 
                 $\nu_{k + 1} = \frac{R}{(k + 1)^2}$.} \\
\\
\mbox{2. Set up contracted smooth part } \\[10pt]
$$
\ba{rcl}
\qquad \qquad \bar{f}_k(\xx) & = & A_{k + 1} f\bigl( \gamma \xx + (1 - \gamma) \xx_k \bigr)
\ea
$$ \\[10pt]
\mbox{ and augmented composite part }\\[10pt]
$$
\ba{rcl}
\qquad \qquad \bar{\psi}_k(\xx) & = & a_{k + 1} \psi(\xx) + \frac{1}{2} \|\xx - \vv_k \|^2.
\ea
$$ \\
\\
\mbox{3. Run Dual Newton Method:} \\[10pt]
$$
\ba{rcl}
\qquad \qquad \vv_{k + 1} & = & \mathscr{D}( \bar{f}_k, \bar{\psi}_k, 
\vv_k, \gamma M, \nu_{k + 1}).
\ea
$$ \\
\\
\mbox{4. Set $\xx_{k + 1} = \gamma \vv_{k + 1} + (1 - \gamma) \xx_k$.} \\
\\
\ea\\
\hline
\ea
\eeq

We establish the following global
guarantee for this process.

\begin{lemma} \label{LemmaAcceleratedGuarantee}
	For the iterations of method~\eqref{AcceleratedNewton} we have, for $k \geq 1$:
	\beq \label{AccGuarantee}
	\ba{cl}
	& A_k (F(\xx_k) - F^{\star}) + 
	\frac{1}{2}\| \vv_k - \xx^{\star} \|^2
	+ \frac{1}{2} \sum\limits_{i = 1}^k \| \vv_{i} - \vv_{i - 1}  \|^2 \\[10pt]
	& \leq \;\; 
	\frac{1}{2} \Bigl( 
	\|\xx_0 - \xx^{\star} \|
	+ \sqrt{2A_0 ( F(\xx_0) - F^{\star} )} 
	+ 4R \Bigr)^2.
	\ea
	\eeq
\end{lemma}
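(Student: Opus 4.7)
The plan is to establish, by induction on $k$, a per-iteration inequality
\[
\Phi_{k+1} \;\leq\; \Phi_k \,+\, \nu_{k+1} \| \vv_{k+1} - \xx^{\star} \|
\]
for the Lyapunov functional
\[
\Phi_k \;\Def\; A_k \bigl( F(\xx_k) - F^{\star} \bigr) + \frac{1}{2}\| \vv_k - \xx^{\star}\|^2 + \frac{1}{2} \sum_{i=1}^k \| \vv_i - \vv_{i-1} \|^2,
\]
and then to convert this additive recursion into the announced closed-form bound via a bootstrap. The starting point is the inexact optimality condition at $\vv_{k+1}$: since the inner objective $h_k$ in \eqref{ContrProxH} is $1$-strongly convex and the Dual Newton Method is run to tolerance $\nu_{k+1}$, there exist $\psi'(\vv_{k+1}) \in \partial \psi(\vv_{k+1})$ and a vector $G$ with $\| G \|_* \leq \nu_{k+1}$ satisfying $a_{k+1}\bigl(\nabla f(\xx_{k+1}) + \psi'(\vv_{k+1})\bigr) + \mat{B}(\vv_{k+1} - \vv_k) = G$, where I use $a_{k+1} = \gamma A_{k+1}$ and $\xx_{k+1} = \gamma \vv_{k+1} + (1-\gamma)\xx_k$. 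Taking the inner product with $\vv_{k+1} - \xx^{\star}$ and expanding the proximal term through the three-point identity produces the squared-norm pieces $\frac{1}{2}\|\vv_{k+1} - \vv_k\|^2 + \frac{1}{2}\|\vv_{k+1} - \xx^{\star}\|^2 - \frac{1}{2}\|\vv_k - \xx^{\star}\|^2$ together with a residual $\la G, \vv_{k+1} - \xx^{\star}\ra$.

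Next, the linear form $a_{k+1}\la \nabla f(\xx_{k+1}) + \psi'(\vv_{k+1}), \vv_{k+1} - \xx^{\star}\ra$ has to be converted into an objective difference of the shape $A_{k+1}(F(\xx_{k+1}) - F^{\star}) - A_k(F(\xx_k) - F^{\star})$. The key algebraic identity is $a_{k+1}(\vv_{k+1} - \xx^{\star}) = A_{k+1}(\xx_{k+1} - \xx^{\star}) - A_k(\xx_k - \xx^{\star})$, immediate from $A_{k+1}\xx_{k+1} = A_k \xx_k + a_{k+1}\vv_{k+1}$. Applying convexity of $f$ at $\xx_{k+1}$ with test points $\xx^{\star}$ and $\xx_k$ yields $a_{k+1}\la \nabla f(\xx_{k+1}), \vv_{k+1} - \xx^{\star}\ra \geq A_{k+1} f(\xx_{k+1}) - A_k f(\xx_k) - a_{k+1} f(\xx^{\star})$. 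The analogous bound for $\psi$ combines the subgradient inequality at $\vv_{k+1}$ with the Jensen-type estimate $a_{k+1}\psi(\vv_{k+1}) \geq A_{k+1}\psi(\xx_{k+1}) - A_k \psi(\xx_k)$ that comes from $\xx_{k+1} = \gamma \vv_{k+1} + (1-\gamma)\xx_k$. Summing these two estimates and substituting into the identity from the previous paragraph yields exactly $\Phi_{k+1} \leq \Phi_k + \la G, \vv_{k+1} - \xx^{\star}\ra \leq \Phi_k + \nu_{k+1}\|\vv_{k+1} - \xx^{\star}\|$.

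The hard part will be turning this recursion into the announced closed form, because the error coefficient $\nu_{k+1}$ multiplies the very quantity $\|\vv_{k+1} - \xx^{\star}\|$ we need to control, so a direct telescoping fails. I plan to resolve this through a bootstrap: the definition of $\Phi_{k+1}$ already gives $\|\vv_{k+1} - \xx^{\star}\|^2 \leq 2\Phi_{k+1}$, so the substitution $S_k \Def \sqrt{2\Phi_k}$ converts the recursion into $S_{k+1}^2 \leq S_k^2 + 2\nu_{k+1} S_{k+1}$, i.e.\ $(S_{k+1} - \nu_{k+1})^2 \leq S_k^2 + \nu_{k+1}^2$. The elementary inequality $\sqrt{a^2 + b^2} \leq a + b$ for $a, b \geq 0$ then gives $S_{k+1} \leq S_k + 2\nu_{k+1}$. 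Telescoping with the schedule $\nu_i = R/i^2$ and using $\sum_{i \geq 1} 1/i^2 = \pi^2/6 < 2$ produces $S_k \leq S_0 + 4R$. Since $\vv_0 = \xx_0$, we have $S_0^2 = 2 A_0(F(\xx_0) - F^{\star}) + \|\xx_0 - \xx^{\star}\|^2$, and applying $\sqrt{a + b} \leq \sqrt{a} + \sqrt{b}$ yields $S_0 \leq \sqrt{2 A_0 (F(\xx_0) - F^{\star})} + \|\xx_0 - \xx^{\star}\|$; recalling $\Phi_k = \frac{1}{2} S_k^2$ recovers exactly \eqref{AccGuarantee}.
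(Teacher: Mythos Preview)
Your proof is correct and follows essentially the same route as the paper: the same Lyapunov quantity $\Phi_k$, the same one-step inequality $\Phi_{k+1}\le\Phi_k+\nu_{k+1}\|\vv_{k+1}-\xx^\star\|$, and the same bootstrap $S_{k+1}\le S_k+2\nu_{k+1}$ with $S_k=\sqrt{2\Phi_k}$. The only cosmetic differences are that the paper derives the one-step bound via the strong-convexity inequality $h_k(\xx^\star)\ge h_k(\vv_{k+1})+\la h_k'(\vv_{k+1}),\xx^\star-\vv_{k+1}\ra+\tfrac12\|\xx^\star-\vv_{k+1}\|^2$ and Jensen on $f(\gamma\xx^\star+(1-\gamma)\xx_k)$, whereas you take the inner product of the inexact stationarity condition with $\vv_{k+1}-\xx^\star$ and use the identity $a_{k+1}(\vv_{k+1}-\xx^\star)=A_{k+1}(\xx_{k+1}-\xx^\star)-A_k(\xx_k-\xx^\star)$; and the paper obtains $R_{k+1}\le R_k+2\nu_{k+1}$ by dividing through by $\tfrac12 R_{k+1}$ rather than completing the square.
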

\begin{proof}
	Let us prove by induction the following bound,
	for all $k \geq 0$:
	\beq \label{AccInductBound}
	\ba{rcl}
	\frac{1}{2}R_k^2
	+ A_k F^{\star}
	& \geq & 
	\frac{1}{2}\| \vv_k - \xx^{\star}\|^2
	+
 	 A_k F(\xx_k) + \frac{1}{2}\sum\limits_{i = 1}^k \| \vv_{i} - \vv_{i - 1} \|^2,
	\ea
	\eeq
	where
	$$
	\ba{rcl}
	\frac{1}{2}R_k^2 & \Def & 
	\frac{1}{2}\| \xx_0 - \xx^{\star} \|^2
	+ A_0(F(\xx_0) - F^{\star}) 
	+ \sum\limits_{i = 1}^k
		\nu_i  \|\vv_i - \xx^{\star}\|.
	\ea
	$$
	Inequality~\eqref{AccInductBound} obviously holds for $k = 0$.
	Assume that it is satisfied for some $k \geq 0$ and consider one iteration of the method.
	We have
	\beq \label{AccProve1}
	\ba{cl}
	& \frac{1}{2} R_{k + 1}^2 + A_{k + 1} F^{\star}
	\;\; = \;\;
	\frac{1}{2} R_k^2 + \nu_{k + 1}\| \vv_{k + 1} - \xx^{\star} \| + A_k F^{\star} 
	+ a_{k + 1} F^{\star} \\[10pt]
	& \overset{\eqref{AccInductBound}}{\geq} \;
	\frac{1}{2} \| \vv_k - \xx^{\star}\|^2
	+ A_k F(\xx_k) + \frac{1}{2}\sum\limits_{i = 1}^k \| \vv_{i} - \vv_{i - 1}  \|^2
	+
	a_{k + 1} F^{\star}
	+ \nu_{k + 1}\| \vv_{k + 1} - \xx^{\star} \| \\[10pt]
	& \; \geq \;\;
	h_k(\xx^{\star}) + A_k \psi(\xx_k) + \frac{1}{2}\sum\limits_{i = 1}^k \| \vv_{i} - \vv_{i - 1} \|^2
	+ \nu_{k + 1}\| \vv_{k + 1} - \xx^{\star} \|,
 	\ea
	\eeq
	where we used convexity of $f(\cdot)$ in the last inequality,
	definition \eqref{ContrProxH}, and our choice
	of parameters: $\frac{a_{k + 1}}{A_{k + 1}} \equiv \gamma$.
	Note that function $h_k(\cdot)$ is $1$-strongly convex.
	Hence,
	$$
	\ba{rcl}
	h_k(\xx^{\star}) & \geq & h_k (\vv_{k + 1})
	+ \la h_k'(\vv_{k + 1}), \xx^{\star} - \vv_{k + 1} \ra
	+ \frac{1}{2} \| \xx^{\star} - \vv_{k + 1} \|^2 \\[10pt]
	& \overset{\eqref{InexactVk1}}{\geq} &
	h_k (\vv_{k + 1}) - \nu_{k + 1} \|\vv_{k + 1} - \xx^{\star}\|
	+ \frac{1}{2} \|  \vv_{k + 1} - \xx^{\star} \|^2 \\[10pt]
	& = &
	A_{k + 1} f(\xx_{k + 1}) + a_{k + 1} \psi(\vv_{k + 1})
	+ \frac{1}{2}\| \vv_{k + 1} - \vv_k \|^2  \\[10pt]
	& &
	\qquad \qquad
	- \; \nu_{k + 1} \|\vv_{k + 1} - \xx^{\star}\|
	+ \frac{1}{2} \| \vv_{k + 1} - \xx^{\star} \|^2.
 	\ea
	$$
	Substituting this bound into \eqref{AccProve1}
	and using convexity of $\psi(\cdot)$, we obtain
	inequality~\eqref{AccInductBound}
	for the next iteration.
	Therefore, \eqref{AccInductBound} is proven for all $k \geq 0$.
	
	Now, from \eqref{AccInductBound}, we have that
	\beq \label{AccProxBound}
	\ba{rcl}
	\| \vv_k - \xx^{\star} \| & \leq & R_k, \qquad \forall k \geq 0.
	\ea
	\eeq
	Therefore,
	$$
	\ba{rcl}
	\frac{1}{2}R_{k + 1}^2
	& = & 
	\frac{1}{2} R_k^2 + \nu_{k + 1} \| \vv_{k + 1} - \xx^{\star} \|
	\;\; \overset{\eqref{AccProxBound}}{\leq} \;\;
	\frac{1}{2} R_k^2 + \nu_{k + 1} R_{k + 1}.
	\ea
	$$
	Dividing this inequality by $\frac{1}{2}R_{k + 1}$, we get
	$$
	\ba{rcl}
	R_{k + 1} & \leq & \frac{R_k^2}{R_{k + 1}} + 2\nu_{k + 1}
	\;\; \leq \;\;
	R_k + 2\nu_{k + 1}, \qquad \forall k \geq 0.
	\ea
	$$
	Hence, by induction, we justify that
	$$
	\ba{rcl}
	R_k & \leq & R_0 + 2 \sum\limits_{i = 1}^k \nu_i
	\;\; = \;\; \sqrt{ \|\xx_0 - \xx^{\star}\|^2 + 2A_0(F(\xx_0) - F^{\star}) }
	+ 2R \sum\limits_{i = 1}^k \frac{1}{i^2} \\[10pt]
	& \leq &
	\|\xx_0 - \xx^{\star}\| + \sqrt{2A_0(F(\xx_0) - F^{\star})}
	+ 4R,
	\ea
	$$
	which completes the proof.
\end{proof}

Let us choose our parameters $R, A_0, \gamma$. We assume 
that $R$ and $A_0$ satisfy the following conditions:
\beq \label{AccParametersRA}
\ba{rcl}
R & \geq & \max\Bigl\{ \|\xx_0 - \xx^{\star}\|, \frac{2^{3/2}}{M} \Bigr\},
\qquad 
A_0 \;\; := \;\; \frac{c^2R^2}{2(F(\xx_0) - F^{\star})},
\ea
\eeq
for some absolute constant $c > 0$, and set
the contracting coefficient as:
\beq \label{AccGammaChoice}
\boxed{
\ba{rcl}
\gamma & := & \frac{1}{(MR)^{2/3}}
\ea
}
\eeq
Then, inequality \eqref{AccGuarantee} leads to
\beq \label{AccGuarantee2}
\ba{rcl}
A_k (F(\xx_k) - F^{\star}) + 
\frac{1}{2}\| \vv_k - \xx^{\star} \|^2
+ \frac{1}{2} \sum\limits_{i = 1}^k \| \vv_i - \vv_{i - 1} \|^2
& \leq & 
\frac{(5 + c)^2}{2}R^2.
\ea
\eeq
Therefore, we \textit{predefine} the \underline{accelerated global linear rate} for the iterations of our algorithm~\eqref{AcceleratedNewton}:
\beq \label{AccRate}
\ba{rcl}
F(\xx_k) - F^{\star} & \overset{\eqref{AccGuarantee2}}{\leq} &
\frac{(5 + c)^2 R^2}{2A_k}
\;\; \overset{\eqref{AkGrowth}}{\leq} \;\;
\exp\bigl( -\gamma k \bigr)\frac{(5 + c)^2 R^2}{2A_0} \\[10pt]
& \overset{\eqref{AccParametersRA},\eqref{AccGammaChoice}}{=} &
\exp\Bigl( - \frac{k}{(MR)^{2/3}}  \Bigr) \bigl( 1 + \frac{5}{c}  \bigr)^2
\bigl( F(\xx_0) - F^{\star} \bigr),
\qquad k \geq 1.
\ea
\eeq

Note that from~\eqref{AccGuarantee2}, we also conclude 
the boundedness for both our proximal sequence:
\beq \label{AccBoundedVk}
\ba{rcl}
\| \vv_k - \xx^{\star} \| & \overset{\eqref{AccGuarantee2}}{\leq} &
(5 + c) R, \qquad \forall k \geq 0,
\ea
\eeq
and, by induction, for the main sequence of points:
\beq \label{AccBoundedXk}
\ba{rcl}
\| \xx_{k + 1} - \xx^{\star} \| & \leq &
\gamma \|\vv_{k + 1} - \xx^{\star} \| + (1 - \gamma) \| \xx_k - \xx^{\star} \|
\;\; \overset{\eqref{AccBoundedVk}}{\leq} \;\; 
(5 + c)R, \qquad \forall k \geq 0.
\ea
\eeq

It remains to estimate the total number of iterations
of the Dual Newton Method, called at each step
of our accelerated scheme.
For that, we denote the Lipschitz constant
of the gradient $\nabla f(\cdot)$
and the Lipschitz constant
of the composite part $\psi(\cdot)$ 
over a compact convex set, respectively by
$$
\ba{rcl}
L_{f, 1} & \Def & 
\max\limits_{\xx \in Q}
\Bigl\{ \,
\| \nabla^2 f(\xx) \|
\; : \;
\| \xx - \xx^{\star} \| \leq (5 + c) R
\,
\Bigr\}
\ea
$$
and
$$
\ba{rcl}
L_{\psi, 0} & \Def & 
\max\limits_{\xx \in Q}
\Bigl\{ \,
\|  \psi'(\xx) \|_*
\; : \;
\psi'(\xx) \in \partial \psi(\xx), \;
\| \xx - \xx^{\star} \| \leq (5 + c) R
\,
\Bigr\}.
\ea
$$
Then, the subgradient $\| h'(\vv_k) \|_*$ that appears
under logarithm in \eqref{InnerMethodIters} can be globally bounded.
Let us denote $\yy_k = \gamma \vv_k + (1 - \gamma) \xx_k$.
We have (see, e.g., Theorem 2.1.5 in \cite{nesterov2018lectures} for the first inequality):
\beq \label{DiffGradBound}
\ba{rcl}
\frac{1}{2L_{f, 1}} \| \nabla f(\yy_k) - \nabla f(\xx^{\star}) \|_*^2
& \leq & 
f(\yy_k) - f(\xx^{\star}) - \la \nabla f(\xx^{\star}), \yy_k - \xx^{\star} \ra  \\[10pt]
& \leq &
F(\yy_k) - F^{\star}
\;\; \leq \;\; \mathcal{F}_0,
\ea
\eeq
where
$$
\ba{rcl}
\mathcal{F}_0 & \Def & 
\max\limits_{\xx \in Q} 
\Bigl\{ 
F(\xx) - F^{\star} \; : \; \|\xx - \xx^{\star} \| \leq (5 + c)R
\Bigr\}.
\ea
$$
Therefore, we obtain the bound
\beq \label{HSubgrBound}
\ba{rcl}
\| h'(\vv_k) \|_*
& = & 
a_{k + 1} \| \nabla f(\yy_k) + \psi'(\yy_k) \|_*
\;\; \leq \;\;
a_{k + 1} \bigl( 
\| \nabla f(\yy_k) - \nabla f(\xx^{\star}) \|_* + 2L_{\psi, 0}
\bigr) \\[10pt]
& \overset{\eqref{DiffGradBound}}{\leq} &
a_{k + 1}\bigl( 
\sqrt{2L_{f, 1} \mathcal{F}_0} + 2L_{\psi, 0} \bigr)
\;\; \overset{\eqref{AkGrowth}}{=} \;\;
\gamma A_0 
\bigl( 
\sqrt{2L_{f, 1} \mathcal{F}_0} + 2L_{\psi, 0} \bigr)
 (1 - \gamma)^{-(k + 1)} \\[10pt]
 & \leq &
 \gamma A_0 (\sqrt{2L_{f, 1} \mathcal{F}_0} + 2L_{\psi, 0} \bigr) e^{2\gamma(k + 1)}.
\ea
\eeq

Combining these observations with our \textit{hot-start}
estimate \eqref{InnerMethodIters} for the iterations of the inner method,
we prove the following result
about our accelerated scheme.

\begin{theorem} \label{TheoremAccelerated}
	Let the smooth component of problem~\eqref{MainProblem} be
	Quasi-Self-Concordant with parameter $M > 0$.
	Let the parameters $R$, $A_0$, $\gamma$
	be chosen according to~\eqref{AccParametersRA},\eqref{AccGammaChoice}.
	Then, the number of iterations of method~\eqref{AcceleratedNewton}
	to reach $\varepsilon$-accuracy~\eqref{RelAccuracyGuarantee} is bounded as
	\beq \label{AccKBound}
	\ba{rcl}
	k & \leq & \cO\Bigl( (MR)^{2/3} \cdot \ln \frac{1}{\varepsilon} \Bigr).
	\ea
	\eeq
	The total number of iterations of the inner Dual Newton Method
	is bounded as
	$$
	\ba{rcl}
	\sum\limits_{i = 1}^k I_i
	& \leq & \cO\Bigl( 
	(MR)^{2/3} \cdot \ln \frac{1}{\varepsilon} \cdot \mathcal{L}
	\Bigr),
	\ea
	$$
	where
	$$
	\ba{rcl}
	\mathcal{L} & \Def & 
	\ln\Bigl[ \, \frac{(MR)^{2/3}  R}{F(\xx_0) - F^{\star}} 
	\cdot ( \sqrt{L_{f, 1} \mathcal{F}_0} + L_{\psi, 0}  ) \cdot \frac{1}{\varepsilon} \ln^2 \frac{1}{\varepsilon}  \,\Bigr].
	\ea
	$$
\end{theorem}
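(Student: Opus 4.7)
The outer-iteration bound is immediate from the predefined accelerated linear rate \eqref{AccRate}: demanding the right-hand side be at most $\varepsilon(F(\xx_0)-F^{\star})$ and inverting the exponential yields $k \geq (MR)^{2/3} \ln\bigl(\tfrac{(1+5/c)^2}{\varepsilon}\bigr)$, which is exactly the required $K := \mathcal{O}\bigl((MR)^{2/3} \ln(1/\varepsilon)\bigr)$. The substantive part of the proof is bounding the total number of inner Dual Newton iterations $\sum_{i=1}^{K} I_i$.

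\textbf{Summing the inner counts.} Starting from the hot-start estimate \eqref{InnerMethodIters}, I would bound $\sum_i I_i$ by treating its three contributions separately. The first contribution, $\sum_i \gamma^2 M^2 \|\vv_i - \vv_{i-1}\|^2$, collapses using the potential-function bound \eqref{AccGuarantee2}, which yields $\sum_i \|\vv_i - \vv_{i-1}\|^2 \leq (5+c)^2 R^2$; combined with the identity $\gamma^2 M^2 R^2 = (MR)^{2/3}$ coming from \eqref{AccGammaChoice}, this gives a contribution of $\mathcal{O}((MR)^{2/3})$ that is \emph{independent} of $K$. The second contribution, $\sum_i \gamma^2 M^2 \nu_i^2$, is harmless because the choice $\nu_i = R/i^2$ makes $\sum_i \nu_i^2$ a convergent series bounded by $\mathcal{O}(R^2)$, yielding again $\mathcal{O}((MR)^{2/3})$.

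\textbf{The logarithmic term.} The third contribution, $\sum_i \ln\bigl(\|h'(\vv_{i-1})\|_*/\nu_i\bigr)$, is the dominant one. Here I would substitute the uniform bound \eqref{HSubgrBound} for the numerator and $1/\nu_i = i^2/R$ for the denominator. Two facts tame this sum: first, $e^{2\gamma i} \leq e^{2\gamma K} = \mathcal{O}(1/\varepsilon^{2})$ since by our choice of $K$ we have $\gamma K = \mathcal{O}(\ln(1/\varepsilon))$; second, $i^2/R \leq K^2/R = \mathcal{O}\bigl((MR)^{4/3} \ln^2(1/\varepsilon)/R\bigr)$. After inserting $\gamma = (MR)^{-2/3}$ and $A_0 = c^2 R^2/(2(F(\xx_0)-F^\star))$ and simplifying, the per-term logarithm is uniformly bounded by the quantity $\mathcal{L}$ stated in the theorem; summing over $K$ indices gives $\mathcal{O}(K \cdot \mathcal{L})$. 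Combining the three parts, $\sum_i I_i = \mathcal{O}((MR)^{2/3}) + \mathcal{O}(K \cdot \mathcal{L}) = \mathcal{O}((MR)^{2/3} \ln(1/\varepsilon) \cdot \mathcal{L})$, as claimed.

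\textbf{Main obstacle.} Conceptually nothing is hard: the outer rate is predefined, the first sum is absorbed by \eqref{AccGuarantee2}, and the second sum is a $p$-series. The only genuinely non-routine step is the bookkeeping producing the precise form of $\mathcal{L}$: one has to regroup the constants $\gamma A_0/R$, the exponential factor $e^{2\gamma K}$, and the polynomial factor $K^2$ into the exact expression appearing in the theorem, tracking carefully how the choice of $R$ and $A_0$ in \eqref{AccParametersRA} interacts with the bound on $\|h'(\vv_k)\|_*$. Once this matching is done, the three sums combine as indicated and give the stated accelerated complexity.
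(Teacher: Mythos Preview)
Your proposal is correct and follows essentially the same route as the paper: the outer bound comes directly from \eqref{AccRate}, and $\sum_i I_i$ is split via \eqref{InnerMethodIters} into the three contributions you identify, with the first controlled by \eqref{AccGuarantee2}, the second by the convergent $p$-series for $\nu_i$, and the third by \eqref{HSubgrBound}. The only cosmetic difference is that the paper pulls the factor $e^{2\gamma(i+1)}$ out of the logarithm and sums it separately as $\sum_i 2\gamma(i+1) = \mathcal{O}(\gamma k^2) = \mathcal{O}\bigl((MR)^{2/3}\ln^2(1/\varepsilon)\bigr)$, whereas you bound it uniformly by $e^{2\gamma K}$ inside the log; both yield the same contribution and the same final form of $\mathcal{L}$.
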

\begin{proof}
	Bound \eqref{AccKBound}
	follows directly from~\eqref{AccRate}.
	Then, for the total number of iterations
	of the inner method, we have
	$$
	\ba{rcl}
	\sum\limits_{i = 1}^k I_i
	& \overset{\eqref{InnerMethodIters}}{\leq} &
	\cO\biggl( 
	\gamma^2 M^2 \sum\limits_{i = 1}^k \| \vv_{i} - \vv_{i - 1}\|^2
	+ \gamma^2 M^2 \sum\limits_{i = 1}^k \nu_i^2
	+ \sum\limits_{i = 1}^k \ln \frac{\| h'(\vv_i) \|_*}{\nu_i}
	\biggr) \\
	\\
	& \overset{\eqref{AccGuarantee2}}{\leq} &
	\mathcal{O}\biggl( 
	(\gamma MR)^2
	+ \gamma^2 M^2 \sum\limits_{i = 1}^k \nu_i^2
	+ \sum\limits_{i = 1}^k \ln \frac{\| h'(\vv_i) \|_*}{\nu_i}
	\biggr) \\
	\\
	& \leq & 
	\mathcal{O}\biggl( 
	(\gamma MR)^2
	+ \sum\limits_{i = 1}^k \ln \frac{i^2 \| h'(\vv_i) \|_* }{R}
	\biggr) \\
	\\
	& \overset{\eqref{HSubgrBound}}{\leq} &
	\mathcal{O}\biggl( 
	(\gamma MR)^2
	+ \gamma k^2
	+ \sum\limits_{i = 1}^k 
	\ln \frac{k^2 \gamma A_0 (\sqrt{L_{f, 1} \mathcal{F}_0} + L_{\psi, 0}) }{R}
	\biggr).
	\ea
	$$
	It remains to note that $(\gamma MR)^2 = \frac{1}{\gamma} = (MR)^{2/3}$
	and $\gamma k^2 = \frac{k^2}{(MR)^{2/3}} \overset{\eqref{AccKBound}}{\leq}
	\cO\Bigl(  (MR)^{2/3} \ln^2 \frac{1}{\varepsilon} \Bigr)$.
	Combining all estimates together completes the proof.
\end{proof}

\section{Discussion}
\label{SectionDiscussion}

In this work, we develop second-order optimization algorithms
for minimizing Quasi-Self-Concordant functions.
We prove that our methods possess fast global linear rates
of convergence on these problems, without additional
assumptions on strong or uniform convexity of the target objective.

The global linear rates for this problem class
were first proven in \cite{karimireddy2018global} using a trust-region scheme
based on the idea of \textit{Hessian stability}.
However, our algorithms use the Gradient Regularization technique
and involve only simple quadratic subproblems,
which are easier to implement and
can be more suitable for solving large-scale problems.

The primal Newton method~\eqref{PrimalNewton}
with implementation of the adaptive search matches
the recently proposed \textit{Super-Universal Newton Method}
(Algorithm~2 with $\alpha := 1$ in \cite{doikov2022super}).
It was shown in \cite{doikov2022super}
that this method automatically achieves 
good global convergence guarantees for wide classes
of functions with H\"older continuous second and third derivatives.
Therefore, our results also enhance complexity guarantees of the 
Super-Universal Newton Method
to Quasi-Self-Concordant functions.
Note that numerical experiments
in \cite{doikov2022super} demonstrated
that the regularization of the Newton Method with the first power of the gradient norm $(\alpha = 1)$
performs especially well in practice for solving Logistic Regression and Soft Maximum problems.

The dual Newton method~\eqref{DualNewton}
utilizes the local quadratic convergence of the primal Newton steps
and possesses a \textit{hot-start} convergence guarantee
for the dual objects (gradients).
Based on it, we develop the Accelerated Newton Scheme
which improves the global linear rate by taking the power $2/3$ of the condition number.

There are several open questions that are closely related to our results.
First, it would be interesting to compare our algorithms
with recently proposed
accelerated methods developed in the framework
of a \textit{ball-minimization oracle} \cite{carmon2020acceleration}.
The latter methods have the same power $2/3$ of the condition number as the main complexity factor,
but different logarithmic terms.
Moreover, it was also shown in \cite{carmon2020acceleration} that 
the power $2/3$ is \textit{optimal} for the methods based on the ball-minimization oracle.
Consequently, these optimality results could also hold significant implications for the minimization of Quasi-Self-Concordant functions.

Second, while the steps of our accelerated method are very easy to implement, 
its analysis requires fixing several parameters of the problem class,
which are: an estimate for the distance to the solution $R \geq \|\xx_0 - \xx^{\star}\|$, 
a relative distance with respect to the initial functional residual $A_0 \approx R^2 / ( {F(\xx_0) - F^{\star}} )$,
and the parameter of Quasi-Self-Concordance $M > 0$.
Although it is possible to use inexact estimates for these parameters,
the accelerated method itself \textit{predefines} the required liner rate by fixing the contracting coefficient  
$\gamma := 1 / (MR)^{2/3}$. This prevents the method
from adapting to the best problem class (as in the super-universal algorithms \cite{doikov2022super})
and from automatically switching to the local quadratic convergence.
Therefore, the development of more flexible versions of the accelerated algorithms would be highly beneficial in practice.

Finally, it seems to be very interesting to compare our results
with the recent advancements in the Path-Following schemes
for the classic Self-Concordant functions \cite{dvurechensky2018global,nesterov2023set}.
We keep these questions for further investigation.

\bibliographystyle{plain}
\bibliography{bibliography}

\end{document}